\definecolor{darkgreen}{rgb}{0.00,0.5,0.00}
\newtheorem{theorem}{Theorem}
\theoremstyle{definition}
\newtheorem{lemma}{Lemma}
\newtheorem{remark}{Remark}
\renewcommand{\a}{\alpha}
\renewcommand{\b}{\beta}
\renewcommand{\c}{\gamma}
\renewcommand{\d}{\delta}
\newcommand{\la}{\lambda}
\renewcommand{\t}{\tau}
\newcommand{\s}{\sigma}
\newcommand{\e}{\varepsilon}
\renewcommand{\th}{\theta}
\newcommand{\x}{\bar x}
\newcommand{\hx}{\hat x}
\newcommand{\R}{\mathbb R}
\newcommand{\N}{\mathbb N}
\newcommand{\Pp}{\mathcal P}
\newcommand{\n}[1]{\|#1 \|}
\newcommand{\lr}[1]{\langle #1\rangle}
\newcommand{\T}{\top}
\DeclareMathOperator{\prox}{prox}
\DeclareMathOperator*{\argmin}{argmin}
\DeclareMathOperator{\dom}{dom}
\DeclareMathOperator{\ri}{ri}
\title{The primal-dual hybrid gradient method reduces to a primal
    method for linearly constrained optimization problems\thanks{The
        research was supported by the German Research Foundation grant
        SFB755-A4}}
\author{Yura Malitsky\footnote{Institute for Numerical and Applied
        Mathematics, University of G\"ottingen,
        \href{mailto:y.malitsky@gmail.com}{y.malitsky@gmail.com}}}
\date{}
\begin{document}
\maketitle

\begin{abstract}
    In this work, we show that for linearly constrained optimization
    problems the primal-dual hybrid gradient algorithm, analyzed by
    Chambolle and Pock~\cite{chambolle2011first}, can be written as an
    entirely primal algorithm.  This allows us to prove convergence of
    the iterates even in the degenerate cases when the linear system
    is inconsistent or when the strong duality does not hold.  We also
    obtain new convergence rates which seem to improve existing ones
    in the literature. For a decentralized distributed optimization we
    show that the new scheme is much more efficient than the original
    one.
\end{abstract} {\small \textbf{Keywords. } First-order algorithms
    $\cdot$ primal-dual algorithms $\cdot$ convergence rates $\cdot$
    linearly constrained optimization problem $\cdot$ penalty methods
    $\cdot$ distributed optimization
    \medskip\\
    \textbf{MSC2010.} {49M29, 65K10, 65Y20, 90C25} }

\section{Introduction}
In this paper, we study nonsmooth optimization problems with linear
constraints of the form
\begin{equation}\label{lin_con}
  \min_{x\in \R^{n}} \quad g(x)   \quad \text{s.t.} \quad Ax = b,
\end{equation}
where $A\in \R^{m\times n}$, $b\in \R^m$, and
$g\colon \R^n \to (-\infty,+\infty]$ is a proper lower semicontinuous
convex function. Problem~\eqref{lin_con} is one of the most importance
in convex optimization and, in particular, includes conic
optimization, which in turn includes linear and semidefinite
optimization.  Problem~\eqref{lin_con} often arises in machine
learning, inverse problems, and distributed optimization. Notice that
every composite optimization problem $\min_u g_1(u) + g_2(Ku)$ can be
also written in the form~\eqref{lin_con} by setting $x = (u,v)^{\T}$,
$g(x) = g_1(u)+g_2(v)$,
$A = [\begin{array}{c|c} K^{\T} & -I \end{array}]^{\T}$, and $b = 0$.
In the sequel, we restrict our attention to large-scale problems where
computing the projection onto the subspace $\{x\colon Ax = b\}$ is
expensive or even practically impossible.

In this work, we focus on the primal-dual hybrid algorithm (PDHG),
analyzed by Chambolle and Pock in \cite{chambolle2011first}. It is a
popular method for solving convex-concave saddle problems with a
bilinear term, owing to the fact that its iteration requires computing
only two proximal operators and two matrix-vector
multiplications. This is different from the alternating direction
method of multipliers (ADMM)---another popular approach to solve
\eqref{lin_con}, where each iteration requires solving a nontrivial
problem. However, on the other hand, the PDHG algorithm is a
particular case of a more general proximal
ADMM~\cite{banert2016fixing,shefi2014rate}. For possible extensions
and applications of the PDHG, we refer the reader to
\cite{chambolle2016introduction,malitsky2016first,Condat2013,vu2013splitting,he-yuan:2012}.

By introducing the Lagrange multiplier $y$, one can rewrite \eqref{lin_con}  as a saddle point problem
\begin{equation}\label{lin_con_saddle}
  \min_{x\in \R^{n}} \max_{y\in \R^m}\quad g(x) + \lr{Ax,y} - \lr{b,y}.
\end{equation}
Then the PDHG applied to \eqref{lin_con_saddle} generates sequences
$(x^k)$, $(y^k)$ according to
  \begin{equation}\label{eq:1}
        \begin{cases}
        y^{k+1}  =  y^k + \sigma (A \x^k -b)\\
        x^{k+1}  = \prox_{\t g}(x^k - \tau A^{\T} y^{k+1}),
    \end{cases}
\end{equation}
where $x^0\in \R^n, y^0\in \R^m$ are arbitrary and $ \x^{k} = 2x^{k} -x^{k-1}$ for all
$k\in \N$. The convergence result stated in \cite{chambolle2011first} says
that if there exists a saddle point for problem~\eqref{lin_con_saddle} and
$\t \s \n{A}^2<1$, then $(x^k, y^k)$ converges to a saddle point of
\eqref{lin_con_saddle}. Moreover, in this case, we have $O(1/k)$
ergodic rate for the primal-dual gap.

In this work, we are interested in cases which are not covered by the above
statement. More precisely, we provide answers to the following two questions.
\begin{itemize}
    \item \emph{What will happen to the iterates of \eqref{eq:1} if problem
    \eqref{lin_con} is infeasible?}
  \item \emph{What will happen to the iterates of \eqref{eq:1} if there is no saddle
    point in \eqref{lin_con_saddle}?}
\end{itemize}

Note that the standard analysis of the PDHG in
\cite{chambolle2011first}, or alternative ones in
\cite{he-yuan:2012,malitsky2016first}, cannot resolve aforementioned
issues.  To answer these questions, we show that the primal-dual
algorithm \eqref{eq:1} can be reformulated as an entirely primal
algorithm, \emph{i.e.,} without resorting to dual variables. This is
in fact our main result, as it views the primal-dual algorithm
\eqref{eq:1} from a new perspective. We reveal a connection of
\eqref{eq:1} to the accelerated proximal gradient
method~\cite{tseng08} for the composite minimization.  A novel
analysis yields new convergence rates which are more suitable in
practice and seem to be better than existing ones. We show a
connection of the PDHG to the diagonal penalty methods and inverse
problems. The new scheme is simpler than the original one to implement
and has a smaller memory footprint. In fact, at least from an
algorithmic point of view, it might be the simplest existing scheme
for solving such a generic problem \eqref{lin_con}.  In contrast to
the standard PDHG method, it can be applied to a decentralized
distributed optimization problem with only one communication per
iteration. Moreover, it also achieves a better complexity. The new
scheme is favorable to new extensions. For example, in the subsequent
paper \cite{luke2018block}, inspired by the coordinate extension of
Tseng's method~\cite{tseng08} proposed by Fercoq and
Richt\'arik~\cite{fercoq2015accelerated}, we derive a coordinate
extension of the PDHG for \eqref{lin_con}.

\smallbreak

\textbf{Paper outline}. In section~\ref{sec:pre} we briefly recall the
standard notation from convex analysis and establish several
preparatory lemmas. Section~\ref{sec:lin_con} is dedicated to the new
analysis of the PDHG algorithm \eqref{eq:1} and its consequences. In
section~\ref{sec:gen} we consider several generalizations of the PDHG
method: when $g$ is strongly convex and when the objective in
\eqref{lin_con} has an additional smooth term.

\section{Preliminaries}\label{sec:pre}
Throughout the paper we will work in a finite-dimensional vector space
$\R^n$ equipped with an inner product $\lr{\cdot,\cdot}$ and a norm
$\n{\cdot} = \sqrt{\lr{\cdot,\cdot}}$.
A function $g$ is called $\c$--strongly convex function, if
$g - \frac{\c}{2}\n{\cdot}^2$ is convex. For a convex lower
semi-continuous (lsc) function $g\colon \R^n\to (-\infty, +\infty]$ we
denote by $\dom g$ the domain of $g$, \emph{i.e.}, the set
$\{x \colon g(x)< + \infty\}$, and by $\prox_g$ the proximal operator
of $g$ that is $\prox_g(z) = \argmin_x\{g(x)+\frac 1
2\n{x-z}^2\}$. The following characteristic property (prox-inequality)
will often be used:
\begin{equation}\label{prox_charact}
    \x = \prox_{g}z \quad \Leftrightarrow \quad \lr{\x - z, x- \x} \geq g(\x)
    - g(x) \quad \forall x\in \R^n.
\end{equation}
When $g$ is $\c$--strongly convex, the above inequality can be strengthened:
\begin{equation}\label{prox_charact-strong}
    \x = \prox_{g}z \quad \Leftrightarrow \quad \lr{\x - z, x- \x} \geq g(\x)
    - g(x)+\frac{\c}{2}\n{\x - x}^2 \quad \forall x\in \R^n.
\end{equation}

For a linear operator $A\colon \R^{n}\to \R^m$ with a slight abuse of
notation we denote its operator norm as $\n{A}$. Throughout the paper
$f(x)=\frac 1 2\n{Ax-b}^2$, where $x\in \R^n, b\in \R^m$, and
$f_* := \min_x f(x)$.  Since $f$ is a quadratic, we have
\begin{equation}
    \label{eq:quad-id1}
    \a f(x) + (1-\a) f(y) =  f\bigl(\a x + (1-\a)y\bigr) +
    \frac{\a(1-\a)}{2}\n{A(x-y)}^2\quad \forall x,y\in \R^n\ \forall
    \a \in \R.
\end{equation}
\begin{equation}
    \label{eq:quad-id2}
    f(x) - f_* = f(x) - f(\x) = \frac 1 2 \n{A(x-\x)}^2 \quad \forall x\in \R^n \,\, \forall
    \x \in \argmin f.
\end{equation}
Another useful identity (the cosine law) obviously holds:
\begin{equation}
    \label{eq:cosine}
    2\lr{x-y,z-x} = \n{y-z}^2 - \n{x-y}^2 -\n{x-z}^2 \qquad \forall
    x,y,z \in \R^n.
\end{equation}
We conclude our preliminary section by two important lemmas.
\begin{lemma}\label{fejer}
    Suppose that sequences $(x^k)\subset \R^n$, $(b_k)\subset \R $ and
    a set $D\subset \R^n$ satisfy:
    \begin{enumerate}[(i)]
        \item all cluster points of $(x^k)$ belong to $D$;

        \item for all $x\in D$ the sequence $(\n{x^k-x}^2+b_k)$ is
        nonincreasing and bounded  below.
    \end{enumerate}
    Then the sequence $(x^k)$ converges to some point in $D$.
\end{lemma}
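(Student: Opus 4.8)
The statement is a Fejér/Opial-type convergence result, complicated by the extra term $b_k$. The one feature that makes it work is that $b_k$ does not depend on $x$, so it cancels whenever we compare two anchor points. The plan is as follows. First, from (ii) each sequence $(\n{x^k-x}^2+b_k)$ is nonincreasing and bounded below, hence convergent, for every $x\in D$. Consequently, for any pair $x,x'\in D$ the difference $(\n{x^k-x}^2+b_k)-(\n{x^k-x'}^2+b_k)=\n{x^k-x}^2-\n{x^k-x'}^2$ also converges; expanding the squares, this says that $\lr{x^k,x-x'}$ converges for all $x,x'\in D$. This is the analogue of the usual Opial lemma, now valid despite the presence of $b_k$.

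Granting for the moment that $(x^k)$ is bounded (so that $D\neq\emptyset$, as is anyway implicit in the conclusion), I would finish as in the classical argument. By boundedness the sequence has cluster points, and by (i) each lies in $D$. Let $p$ and $q$ be two cluster points. The sequence $\n{x^k-p}^2-\n{x^k-q}^2$ converges by the previous step; evaluating its limit along a subsequence with $x^{k_j}\to p$ gives $-\n{p-q}^2$, while along a subsequence with $x^{m_j}\to q$ it gives $+\n{p-q}^2$. Uniqueness of the limit forces $\n{p-q}=0$, so the cluster point is unique. A bounded sequence in $\R^n$ with a single cluster point converges to it, and that point lies in $D$; this is the desired conclusion.

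The crux, and the step I expect to be the main obstacle, is the boundedness of $(x^k)$. The inner-product convergence from the first step only controls $x^k$ in the directions spanned by the differences $x-x'$ of points of $D$: if the affine hull of $D$ is all of $\R^n$ this already forces every coordinate of $x^k$ to converge, so boundedness (indeed convergence) is automatic. In the degenerate case where $D$ is low-dimensional --- a single point, say --- those inner products say nothing, and in principle the correction term could let $\n{x^k}$ escape to infinity while $\n{x^k-x}^2+b_k$ stays monotone. Boundedness must therefore be extracted from (ii) itself: fixing $\hx\in D$, monotonicity gives $\n{x^k-\hx}^2+b_k\le \n{x^0-\hx}^2+b_0$, so it suffices to bound $(b_k)$ from below, after which $\n{x^k-\hx}^2\le \bigl(\n{x^0-\hx}^2+b_0\bigr)-\inf_k b_k$ and the sequence is bounded. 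Such a lower bound on $(b_k)$ is exactly what the intended applications supply (there $b_k\ge 0$), so I would make it explicit and then run the Opial argument above.
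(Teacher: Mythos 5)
Your core argument is the same as the paper's, only packaged differently. The paper takes two cluster points $\tilde x_1,\tilde x_2$ (which lie in $D$ by (i)), uses that $a_k(x)=\n{x^k-x}^2+b_k$ converges for every anchor $x\in D$, evaluates $a_k(\tilde x_1)$ and $a_k(\tilde x_2)$ along both subsequences, and adds the two resulting identities so that the limits of $(b_{k_i})$ and $(b_{k_j})$ cancel, leaving $2\n{\tilde x_1-\tilde x_2}^2=0$. You cancel $b_k$ at the outset by subtracting two anchored sequences and then compare limits along the two subsequences. Same mechanism, same output: all cluster points of $(x^k)$ coincide and lie in $D$.

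Where you depart from the paper is the boundedness caveat, and you are right to insist on it: this is a genuine gap, and it sits in the paper's own lemma and proof, not in your argument. The paper proves only uniqueness of cluster points and then asserts convergence, but in $\R^n$ a sequence with at most one cluster point need not converge unless it is bounded, and hypothesis (ii) alone does not force boundedness because $(b_k)$ may tend to $-\infty$. Concretely, take $D=\{0\}\subset\R$, $x^{2j}=0$, $x^{2j+1}=j$, and $b_k=-\n{x^k}^2$: then $\n{x^k-0}^2+b_k\equiv 0$, so (ii) holds; the only cluster point is $0\in D$, so (i) holds; yet $(x^k)$ diverges. So the lemma as stated is false, and the amendment you propose --- assuming $(b_k)$ bounded below (with $D\neq\varnothing$), or directly assuming $(x^k)$ bounded --- is exactly the needed repair, after which your Opial-style argument (and equally the paper's) is complete. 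The repair costs the paper nothing: when Lemma~\ref{fejer} is invoked in the proof of Theorem~\ref{th_lin_cons}(ii), the relevant drift term is $b_k=2\t k\bigl(F_k(s^k)-g_*\bigr)$, which is bounded below by \eqref{eq:7_estim}, and the sequence $(x^k)$ has already been shown to be bounded at that point; so the lemma as \emph{used} is sound, but as \emph{stated} it should carry your extra hypothesis.
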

\begin{proof}
    Suppose, on the contrary, that there exist two different
    subsequences $(x^{k_i})$ and $(x^{k_j})$ such that
    $x^{k_i}\to \tilde x_1$, $x^{k_j}\to \tilde x_2$ and
    $\tilde x_1 \neq \tilde x_2$. Let
    $a_k(x):= \n{x^k-x}^2 + b_k$.  By (ii), the sequence
    $(a_k(x))$ is convergent for any $x\in D$. Setting
    $x:=\tilde x_1$, we obtain
\begin{align*}
\lim_{k\to \infty }a_k(\tilde x_1) & = \lim_{i\to \infty}a_{k_i}(\tilde x_1) =
\lim_{i\to\infty}(\n{x^{k_i}-\tilde x_1}^2 + b_{k_i})
=\lim_{i\to\infty}b_{k_i}\\ & = \lim_{j\to\infty}a_{k_j}(\tilde x_1) =
\lim_{j\to\infty}(\n{x^{k_j}-\tilde x_1}^2 + b_{k_j}) =
\n{\tilde x_2-\tilde x_1}^2 + \lim_{j\to\infty}b_{k_j},
\end{align*}
from which $\lim_{i\to\infty}b_{k_i} = \n{\tilde x_2-\tilde x_1}^2 +
\lim_{j\to\infty}b_{k_j}$ follows. Setting $x=\tilde x_2$, we analogously derive
$$\lim_{j\to\infty}b_{k_j} = \n{\tilde x_1-\tilde x_2}^2
+ \lim_{i\to\infty}b_{k_i},$$
from which we conclude that $\tilde x_1 = \tilde x_2$. Therefore, the
whole sequence $(x^k)$ converges to some point in $D$.
\end{proof}

\begin{lemma} \label{l:3points} Let $f(x) = \frac 1 2 \n{Ax-b}^2$,
    $f_* = \min_x f(x)$ and $\x \in \argmin f$. Then for any
    $u,v \in \R^n$ it holds
\begin{equation*}
\lr{\nabla f(u), \x-v} =2f_* -f(u)-f(v) + \frac 1 2 \n{A(u-v)}^2.
\end{equation*}
\end{lemma}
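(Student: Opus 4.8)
The plan is to unfold both factors on the range side of $A$ and reduce the claim to the elementary identities~\eqref{eq:quad-id2} and~\eqref{eq:cosine}. The natural starting point is the explicit gradient $\nabla f(u) = A^{\T}(Au-b)$, which lets me rewrite the left-hand side as
\begin{equation*}
  \lr{\nabla f(u), \x-v} = \lr{A^{\T}(Au-b),\, \x-v} = \lr{Au-b,\, A(\x-v)}.
\end{equation*}
The whole point is then to convert the factor $Au-b$, which still carries the data $b$, into a pure difference of the form $A(u-\x)$ that can be matched against differences of $f$-values.

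The key observation is that $\x$ minimizes $f$, so $\nabla f(\x) = A^{\T}(A\x-b) = 0$. Consequently the error made by replacing $Au-b$ with $A(u-\x)$ vanishes:
\begin{equation*}
  \lr{Au-b,\, A(\x-v)} - \lr{A(u-\x),\, A(\x-v)} = \lr{A\x-b,\, A(\x-v)} = \lr{\nabla f(\x),\, \x-v} = 0,
\end{equation*}
whence $\lr{\nabla f(u), \x-v} = \lr{A(u-\x),\, A(\x-v)}$. This elimination of $b$ via the optimality of $\x$ is the one genuinely idea-bearing step; everything after it is mechanical.

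It remains to express $\lr{A(u-\x), A(\x-v)}$ through $f$. Applying the cosine law~\eqref{eq:cosine} with the substitution $x=A\x$, $y=Au$, $z=Av$ gives
\begin{equation*}
  2\lr{A(u-\x),\, A(\x-v)} = \n{A(u-v)}^2 - \n{A(u-\x)}^2 - \n{A(v-\x)}^2,
\end{equation*}
and then~\eqref{eq:quad-id2} supplies $\n{A(u-\x)}^2 = 2\bigl(f(u)-f_*\bigr)$ and $\n{A(v-\x)}^2 = 2\bigl(f(v)-f_*\bigr)$. Substituting these and dividing by two yields exactly $\lr{\nabla f(u), \x-v} = 2f_* - f(u) - f(v) + \frac 1 2\n{A(u-v)}^2$, as claimed. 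I anticipate no obstacle beyond keeping track of signs in the cosine-law substitution; the only conceptual ingredient is the use of $\nabla f(\x)=0$ to drop the data vector $b$.
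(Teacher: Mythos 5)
Your proof is correct and takes essentially the same route as the paper's: both use the optimality of $\x$ (you write it as $\nabla f(\x)=0$, the paper as $A^{\T}A\x = A^{\T}b$) to replace $Au-b$ by $A(u-\x)$, then expand $\lr{A(u-\x),A(\x-v)}$ via the cosine law \eqref{eq:cosine} and conclude with \eqref{eq:quad-id2}. No gaps.
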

\begin{proof}
    As $\x\in \argmin f$, $A^{\T}A\x = A^{\T}b$. Using this, we have
    \begin{align*}
        \lr{\nabla f(u), \x-v} = \lr{A^{\T}(Au-b), \x-v} & =  \lr{A^{\T}A(u-\x),
        \x-v} =  \lr{A(u-\x), A(\x-v)} \\ & = \frac{1}{2} \n{A(u-v)^2} - \frac
        1 2\n{A(u-\x)}^2 - \frac 1 2 \n{A(v-\x)}^2.
    \end{align*}
    Then the statement follows directly from \eqref{eq:quad-id2}.
\end{proof}



\section{Main part} \label{sec:lin_con} In this section, we describe
several cases where known analyses of the PDHG are non applicable.

\textbf{Infeasible problem.} The first issue arises when the
constraints in \eqref{lin_con} are inconsistent:
$\mathcal{P} := \{ x \colon Ax = b \} = \varnothing$.
Clearly, in this
case problem \eqref{lin_con}, as a minimization problem over an empty
set, does not have a lot of sense.  If we know in advance that
$\Pp = \varnothing$, probably the most natural thing is to consider
the following generalization of~\eqref{lin_con}:
\begin{equation}\label{lin_con_gen}
  \min_{x\in \R^n} \quad g(x)   \quad \text{s.t.} \quad x\in
  \argmin_{u\in \R^n} f(u),
\end{equation}
where $f(x) = \frac 1 2 \n{Ax-b}^2$. Now the constraints are
always nonempty. Moreover, by optimality condition for $f$, one can equivalently recast
\eqref{lin_con_gen} as
\begin{equation}\label{lin_con_diff}
  \min_{x\in \R^n} \quad g(x)   \quad \text{s.t.} \quad A^{\T}Ax = A^{\T}b.
\end{equation}
Hence, we again have a problem of the same class as \eqref{lin_con}
but now it is always feasible. Thus, we still can apply the PDHG to
\eqref{lin_con_diff}, which yields the following recursion:
\begin{equation}\label{pd:lin2}
    \begin{cases}
        y^{k+1}  = y^k + \sigma (A^{\T}A \x^k -A^{\T}b)\\
        x^{k+1}  = \prox_{\t g}(x^k - \tau A^{\T}A y^{k+1}),
      \end{cases}
    \end{equation}
    where $\t \s \n{A}^4 < 1$ and, as before,
    $\x^{k} = 2x^{k} - x^{k-1}$. This scheme has several
    drawbacks, compared to \eqref{eq:1}. First, it uses four
    matrix-vector multiplications. Of course, one can precompute
    $A^{\T}A$, but for large-scaled problems it might be expensive and
    often not desirable. For instance, if $A$ is sparse, $A^{\T}A$ will
    probably become denser, which will lead to a more expensive
    iteration. Third, it is known that the conditional number of
    $A^{\T}A$ is a square of the conditional number of $A$ \cite[Chapter
    8]{saad}. Hence, when $A$ is ill-conditioned, working with $A^{\T}A$
    will be much harder than with $A$. Finally, the stepsizes in
    \eqref{pd:lin2} have to satisfy a more restrictive inequality
    $\t \s \n{A}^4< 1$. Because of that, \eqref{pd:lin2} will have
    worse estimates for the convergence rate.

Another reason not to apply algorithm \eqref{pd:lin2} to solve
\eqref{lin_con_gen} is the absence of a priori knowledge that the set
$\Pp$ is empty. Thus, the best situation would be to have something
meaningful from the iterates of \eqref{eq:1}. We show that this is
indeed the case: the PDHG algorithm given by \eqref{eq:1} solves a
general problem \eqref{lin_con_gen}, so there is no need to apply
more expensive scheme \eqref{pd:lin2}.

\textbf{No strong duality.} The convergence of almost all widely-used
methods (PDHG, ADMM, variational inequality methods) for
solving~\eqref{lin_con} heavily relies on the duality arguments. The
assumption that the strong duality holds is usually taken for granted,
although sometimes it is not so easy to check whether it is
satisfied. The standard condition that ensures that strong duality
holds for problem~\eqref{lin_con} is $b \in \ri (A\dom g)$, where
$\ri(C) $ stands for the relative interior of $C$. Whenever $g$ is not
full-domain, \emph{i.e.,}  not finite-valued, it becomes non-trivial
to verify it.

\subsection{Primal form of PDHG}
\label{sec:lin:appr}
Here we show that the primal-dual method applied to~\eqref{lin_con}
can be seen as a modified Tseng's method~\cite{tseng08}, whose stepsizes tend to
infinity.

Reducing the primal-dual algorithm \eqref{eq:1} to the only primal
form is in fact trivial.  Iterating the first equation in
\eqref{eq:1}, one can derive
\begin{align*}
       y^{k+1} & = y^k + \s (A\x^k-b) = y^{k-1} + \s A(\x^k+\x^{k-1})
       -2\s b = \dots \\ &  = y^0 + \s A(\x^k+\dots +
       \x^0) - (k+1)\s b.
   \end{align*}
   For simplicity, assume that the PDHG starts from $(x^0, y^0)$ with
   $\x^0 = x^0$ and $y^0 = 0$. Then the above equation is equivalent
   to $y^1 = \s (Ax^0-b)$ if $k=0$, to $y^2 = 2\s (Ax^1- b)$ if $k=1$,
   and to
\begin{equation}
    \label{lin_con:y}
    y^{k+1} = \s A (\x^k + \dots + \x^0) - \s (k+1) b =\s A(2x^k +
    x^{k-1}+\dots + x^1) - \s (k+1) b,
\end{equation}
if $k\geq 2$.  Define a new sequence $(z^k)$ with $z^0 = x^0$,
$z^1 = x^1$ and $z^k = \frac{1}{k+1}(2x^k + x^{k-1}+\dots + x^1)$ for
all $k\geq 2$. Then $y^{k+1} = (k+1)\s (Az^k- b)$ for all $k\geq 0$
and hence, the primal-dual scheme \eqref{eq:1} can be written as
\begin{equation}\label{lin:pd-1}
    \begin{cases}
  z^k  = \frac{k}{k+1}z^{k-1}+\frac{1}{k+1}\x^k\\
  x^{k+1}  = \prox_{\t g} (x^k - (k+1) \t \s A^{\T}(Az^k-b)),
    \end{cases}
\end{equation}
where $k\geq 0$. This is easy to see: the sequence $(z^k)$, defined as
above, obviously satisfies the first recurrent equation in
\eqref{lin:pd-1} and the second equation is a direct consequence of
one in \eqref{eq:1}. We continue transforming the scheme
\eqref{lin:pd-1} by introducing another sequence $(s^k)$ with
$s^0 = x^0$ and $s^k = \frac{x^1+\dots + x^k}{k}$ for $k\geq 1$. Then
\eqref{lin:pd-1} can be cast as
\begin{equation}    \label{pd2}
\begin{cases}
    z^k  = (x^k+ks^{k})/(k+1)\\
    x^{k+1} = \prox_{\t g}(x^k - (k+1)\la \nabla f(z^k))\\
     s^{k+1} = (x^{k+1}+ks^k)/(k+1),
\end{cases}
\end{equation}
where $k\geq 0$ and as usually $f(x) = \frac 1 2 \n{Ax-b}^2$ and
$\la = \tau \sigma $. Remember that the iterates $(x^k)$ in the scheme
\eqref{pd2} are exactly the same as in the PDHG method (with
$y^0=0$). In the theorem below we will show that these iterates
converge in fact to a solution of
\begin{equation}\label{lcg}
  \min_{x\in \R^n} \quad g(x)   \quad \text{s.t.} \quad x\in
  \argmin f,
\end{equation}
which is a more general problem than the original \eqref{lin_con}.

Based on \cite{lan2011primal,nesterov1983method,nesterov07,fista},
Tseng in \cite{tseng08} proposed a simple and elegant way to analyze
accelerated gradient methods of Nesterov for a problem of composite
minimization $\min_x g(x)+h(x)$, where $h\colon \R^n \to \R$ is a
convex smooth function. Among several schemes that Tseng proposed one was the following
\begin{equation}\label{eq:tseng}
  \begin{cases}
    z^k  = \th_k x^k + (1-\th_k) s^{k}\\
    x^{k+1}  = \prox_{\frac{\la}{\th_k} g}(x^k - \frac{\la}{\th_k} \nabla h(z^k))\\
    s^{k+1}  = \th_k x^{k+1} + (1-\th_k) s^k.
  \end{cases}
\end{equation}
Convergence of \eqref{eq:tseng} was proved under the assumption that
$\nabla h$ is $\la^{-1}$--Lipschitz continuous and $\th_k\in (0,1]$
satisfies $\frac{1-\th_k}{\th_k^2} \leq \frac{1}{\th_{k-1}^2}$. The
simplest choice for such $(\th_k)$ is $\th_k = \frac{2}{k+2}$.

It is easy to see how similar \eqref{eq:tseng} and \eqref{pd2}
are. What are the differences? First, the PDHG algorithm uses
$\th_k = \frac{1}{k+1}$, which does not satisfy condition for
$(\th_k)$ in \eqref{eq:tseng} and goes slightly faster to zero than
$\th_k = \frac{2}{k+2}$. This is only due to the fact that $f$ is a
quadratic function, for which we can use tighter estimates. In fact,
the same can be done for the Tseng algorithm in the case $h =
f$. Second, in every iteration Tseng's scheme \eqref{eq:tseng} uses
the same stepsizes for both $g$ and $h$ and this is natural, as both
these functions are independent in the composite minimization
problem. In contrast, in problem \eqref{lin_con_gen} $f$ and $g$ are
not equivalent: $f$ impose hard constraints, thus the stepsize for $f$
goes to infinity. Third, in \eqref{eq:tseng} $h$ can be an arbitrary
function (up to the restrictions above), while in \eqref{pd2} $f$ is a
quadratic function.  It is also interesting to remark that in the case
$h \equiv f$ and $g$ is the indicator function of some closed convex
set, both schemes \eqref{pd2} and \eqref{eq:tseng} coincide: all
equations are the same, only $(\th_k)$ will be slightly different, see
the discussion above.

In fact, since $\nabla f$ is linear, we do not need variable
$z^k$ in \eqref{pd2} at all. Evidently, the scheme~\eqref{pd2} can be cast in a simpler way as
\begin{equation}\label{eq:pd-simpler}
      \begin{cases}
        x^{k+1}  = \prox_{\t g}(x^k - \la \nabla f(x^k +  k
        s^k))\\
        s^{k+1} = (x^{k+1} + ks^k)/(k+1),
    \end{cases}
\end{equation}
where $k\geq 0$ and $x^0 = s^0$.

For convenience, we recall our assumptions and define some notations:
\begin{gather}
 g\colon \R^n\to (-\infty, +\infty] \quad \text{is convex lsc}, \quad  A\in
  \R^{m\times n},\, b\in  \R^m, \quad  f(x) = \frac 1 2 \n{Ax-b}^2,\\
  f_* = \min_x f(x), \quad  g_* = \min_{x\in \argmin f} g(x), \quad  F_k (x) = g(x) + \s k (f(x) - f_*).
\end{gather}
Now we can state our main result.
\begin{theorem}\label{th_lin_cons}
    Assume that the solution set $S$ of \eqref{lcg} is
    nonempty, $\t, \la >0$, and $\la \n{A}^2 <1$. Then for sequences
    $(x^k)$, $(s^k)$, generated by \eqref{pd2} (or \eqref{eq:pd-simpler}), it
    holds
\begin{enumerate}[(i)]
    \item $F_k(s^k) - g_*= O(1/k)$.
      \smallskip
      
        \item If strong duality holds for problem~\eqref{lcg}, then $(x^k)$ and $(s^k)$ converge to a
        solution of~\eqref{lcg}  and $f(x^k)-f_* = O(1/k)$,
        $f(s^k)-f_* = O(1/k^2)$,
        $|F_k(s^k)-g_*|=O(1/k)$, $|g(s^k)-g_*| = O(1/k)$.
        
\smallskip
       \item If $S$ is bounded and $g$ is bounded  below, then all
       cluster points of $(s^k)$ belong to $S$ and
       $f(s^k)-f_* = o(1/k)$.
    \end{enumerate}
\end{theorem}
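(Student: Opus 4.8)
The engine for all three parts is a single per-iteration inequality, obtained by feeding the prox step of \eqref{pd2} into the prox-inequality \eqref{prox_charact} and then exploiting that $f$ is quadratic. Fix $x^*\in S$ (so $f(x^*)=f_*$, $g(x^*)=g_*$) and write $\tilde f:=f-f_*\ge0$; recall $\th_k=\tfrac1{k+1}$, $s^{k+1}-z^k=\th_k(x^{k+1}-x^k)$ and $z^k-x^k=(1-\th_k)(s^k-x^k)$. The plan is to take \eqref{prox_charact} for $x^{k+1}=\prox_{\t g}(x^k-(k+1)\la\nabla f(z^k))$, divide by $\t$, and expand $\lr{\nabla f(z^k),x^*-x^{k+1}}$ by splitting it through $z^k$ and $x^k$ and evaluating each piece with the exact second-order identity for $f$ (the one behind \eqref{eq:quad-id1}--\eqref{eq:quad-id2} and Lemma \ref{l:3points}) rather than mere convexity. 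Choosing the test point $x=x^*$ kills the $\tilde f(x^*)$ term, the cross term is rewritten by the cosine law \eqref{eq:cosine}, and the remainder $\tfrac{\s}{2}\n{A(x^{k+1}-x^k)}^2$ is dominated by the cosine-law term $\tfrac1{2\t}\n{x^{k+1}-x^k}^2$ precisely because $\la\n{A}^2<1$ (i.e. $\s\n{A}^2<1/\t$), so both may be dropped.

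The delicate point is that $\th_k=\tfrac1{k+1}$ violates Tseng's step condition for \eqref{eq:tseng}, so the weights $(k+1)^2$ and $k(k+1)$ in front of $\tilde f(s^{k+1})$, $\tilde f(s^k)$ fail to telescope and leave a surplus $+k\s\tilde f(s^k)$. Here I would use the quadratic structure to the hilt: keeping the exact remainders produces extra nonpositive terms $-(k+1)\s\tilde f(z^k)$ and $-\tfrac{k\s}{2(k+1)}\n{A(s^k-x^k)}^2$, and the elementary identity
\[
k\tilde f(s^k)-(k+1)\tilde f(z^k)-\tfrac{k}{2(k+1)}\n{A(s^k-x^k)}^2=-\tilde f(x^k),\qquad \tilde f(z^k)=\tfrac12\n{A(z^k-x^*)}^2
\]
(the last equality from \eqref{eq:quad-id2}) cancels the surplus exactly. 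The net result is the monotone potential
\[
P_k:=k^2\s\,\tilde f(s^k)+\tfrac1{2\t}\n{x^k-x^*}^2+\sum_{j=1}^{k}\bigl(g(x^j)-g_*\bigr),\qquad P_{k+1}\le P_k-\s\,\tilde f(x^k).
\]
Part (i) is then immediate: from $P_N\le P_0=\tfrac1{2\t}\n{x^0-x^*}^2$, discard the nonnegative terms, use convexity of $g$ with $s^N=\tfrac1N\sum_{j=1}^{N}x^j$ to replace $\sum_{j\le N}g(x^j)$ by $Ng(s^N)$, and divide by $N$; the left-hand side is exactly $F_N(s^N)-g_*$.

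For (ii), strong duality for \eqref{lcg} yields a multiplier with $-A^{\T}Ay^*\in\partial g(x^*)$ and hence $g(x)\ge g_*-\n{Ay^*}\sqrt{2\tilde f(x)}$ for all $x$ (via $\n{A(x-x^*)}^2=2\tilde f(x)$). Substituting $x=s^k$ into (i) turns it into a scalar quadratic inequality in $\sqrt{\tilde f(s^k)}$, whose solution gives $f(s^k)-f_*=O(1/k^2)$, and then $|g(s^k)-g_*|=O(1/k)$ and $|F_k(s^k)-g_*|=O(1/k)$ follow. Since $k\sqrt{\tilde f(s^k)}$ is bounded and $\sum_{j\le k}(g(x^j)-g_*)\ge-\n{Ay^*}\,k\n{A(s^k-x^*)}$ (using $\sum_{j\le k}x^j=ks^k$), the potential $P_k$ is bounded below; being nonincreasing it converges, so $\sum_k\tilde f(x^k)<\infty$ and (retaining the increment term) $\sum_k\n{x^{k+1}-x^k}^2<\infty$, whence $\tilde f(x^k)\to0$, $\n{x^{k+1}-x^k}\to0$ and $(x^k)$ is bounded. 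Every cluster point $\bar x$ then lies in $S$: $\tilde f(x^k)\to0$ forces $\bar x\in\argmin f$, so $A(\bar x-x^*)=0$, and passing to the limit in \eqref{prox_charact} with test point $x^*$ — where asymptotic regularity and boundedness of $(k+1)\n{A(z^k-x^*)}$ annihilate both error terms — gives $\limsup g(x^{k_i})\le g_*\le g(\bar x)$, i.e. $g(\bar x)=g_*$. Now Lemma \ref{fejer} applies with $D=S$ and $b_k=2\t k^2\s\tilde f(s^k)+2\t\sum_{j\le k}(g(x^j)-g_*)$ (independent of $x^*$), giving $x^k\to x^\infty\in S$ and, by Cesàro, $s^k\to x^\infty$.

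The remaining rate $f(x^k)-f_*=O(1/k)$ is where I expect the real difficulty. From $x^k=ks^k-(k-1)s^{k-1}$ and \eqref{eq:quad-id1} with $\a=k$ one gets $\tilde f(x^k)=k^2\tilde f(s^k)+(k-1)^2\tilde f(s^{k-1})-k(k-1)\lr{A(s^k-x^*),A(s^{k-1}-x^*)}$; each term is only $O(1)$, and since the summability $\sum_k\tilde f(x^k)<\infty$ does not by itself yield a pointwise $O(1/k)$ bound, the cross term must be squeezed using the convergence just obtained (or a weighted telescoping tailored to the quadratic $f$) — the step I would expect to take the most care. Finally, for (iii) I would drop strong duality and use only $\inf g>-\infty$: (i) gives $\s k\,\tilde f(s^k)\le\tfrac1{2\t k}\n{x^0-x^*}^2+(g_*-\inf g)$, so $f(s^k)-f_*=O(1/k)\to0$ and $\limsup_k g(s^k)\le g_*$. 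Boundedness of $S$ makes the slab $\{x:f(x)\le f_*+1,\,g(x)\le g_*+1\}$ bounded (recession-cone argument), so $(s^k)$ is bounded; each cluster point $\bar s$ has $\tilde f(\bar s)=0$ and, by lower semicontinuity with $\limsup g(s^k)\le g_*$, $g(\bar s)=g_*$, hence $\bar s\in S$. This forces $\liminf_k g(s^k)\ge g_*$, so $g(s^k)\to g_*$; feeding this back into (i) gives $\s k\,\tilde f(s^k)\le\tfrac1{2\t k}\n{x^0-x^*}^2+(g_*-g(s^k))\to0$, which is exactly $f(s^k)-f_*=o(1/k)$.
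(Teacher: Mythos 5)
Your proposal follows the paper's own route in all essentials: the same prox-inequality start, the same use of the exact quadratic identities (Lemma~\ref{l:3points} and \eqref{eq:quad-id1}) in place of mere convexity, the cosine law, the threshold $\la\n{A}^2<1$ used to absorb the remainder $\tfrac{\s}{2}\n{A(x^{k+1}-x^k)}^2$, the strong-duality bound \eqref{exist_dual} turned into a scalar quadratic inequality in $k\sqrt{f(s^k)-f_*}$, Lemma~\ref{fejer} for convergence of $(x^k)$, and the same outline for (iii). Two reorganizations are genuinely yours, and both check out. First, your potential carries $\sum_{j\le k}(g(x^j)-g_*)$ where the paper carries $k(g(s^k)-g_*)$; by Jensen $k\,g(s^k)\le\sum_{j\le k}g(x^j)$, so your $P_k$ dominates the paper's potential and part (i) follows after one application of Jensen at the end. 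Second, you cancel the non-telescoping surplus $k\s\bigl(f(s^k)-f_*\bigr)$ via the interpolation identity at $z^k$ (your displayed identity is correct: it is \eqref{eq:quad-id1} with $\a=\tfrac{1}{k+1}$ scaled by $k+1$), whereas the paper first splits $\nabla f(z^k)=\tfrac{1}{k+1}\nabla f(x^k)+\tfrac{k}{k+1}\nabla f(s^k)$ and applies Lemma~\ref{l:3points} twice; I verified that in your bookkeeping the leftover quadratic terms collapse to exactly $\tfrac{\s}{2}\n{A(x^{k+1}-x^k)}^2$, so your recursion $P_{k+1}\le P_k-\s\bigl(f(x^k)-f_*\bigr)$ is valid and, after Jensen, reproduces the paper's \eqref{lin_alter-32}. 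In (iii) your boundedness argument is in fact cleaner than the paper's: part (i) already gives $g(s^k)\le g_*+\tfrac{D_x^2}{2\t k}$ and $f(s^k)\le f_*+O(1/k)$, so the tail of $(s^k)$ lies in a single level set of $\max\{g-g_*,f-f_*\}$, bounded because $S$ is; the paper reaches the same conclusion through an induction over the indices where $g(s^k)\ge g_*$.

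The only item you leave unproved is $f(x^k)-f_*=O(1/k)$ in (ii), and you are right that it is the soft spot --- but note that the paper does not prove it in the sense you are after either. The paper's proof extracts from \eqref{liner-2} the summability $\s\sum_{i=0}^{k-1}\bigl(f(x^i)-f_*\bigr)\le\tfrac{D_x^2}{2\t}+\tfrac{D_y^2}{2\s}$ and then simply reads off the rate; as you observe, summability of a non-monotone nonnegative sequence yields only $\min_{i\le k}\bigl(f(x^i)-f_*\bigr)=O(1/k)$ and $\liminf_k k\bigl(f(x^k)-f_*\bigr)=0$, not a pointwise $O(1/k)$ bound, and your alternative decomposition $x^k=ks^k-(k-1)s^{k-1}$ with $k\sqrt{f(s^k)-f_*}$ bounded gives only $f(x^k)-f_*=O(1)$ pointwise. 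So your refusal to assert the pointwise rate is not a missing idea relative to the paper: there is no argument there for you to have missed, and the defensible statement obtainable by this method is the best-iterate (or ergodic) $O(1/k)$ rate. Everything else in your write-up is complete and correct.
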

Therefore, in the most general case one can consider $s^k$ as an
approximated minimizer of the problem $\min_x F_k(x)$. Later we will
show how this latter problem is related to penalty methods. When the
strong duality holds, it is possible to prove convergence of the
iterates and derive some important rates.  Finally, when there is no
strong duality, but the solution set $S$ is bounded and the function
$g$ is bounded below, we still can say something meaningful about
convergence of the iterates $(s^k)$ and the rate of the feasibility
gap $f(s^k)-f_*$. The latter conditions are usually easy to check in
advance, in contrast to the strong duality.

The strong duality plays such an important role here because it allows
us to obtain a key estimate to prove global convergence. Specifically,
assume that the strong duality holds for problem~\eqref{lcg} and
$(x^*,u^*)\in \R^n\times \R^n $ is a saddle point of
\begin{equation*}
    \min_x \max_u g(x) + \lr{u, A^{\T}Ax - A^{\T}b}.
\end{equation*}
This means that $0 \in A^{\T}A u^*+ \partial g(x^*)$ and hence,
\begin{equation}\label{exist_dual}
g(x)-g_*\geq \lr{-A^{\T}A u^*, x-x^*} \geq
-\n{Au^*}\cdot \n{A(x-x^*)} = - D_y \cdot \sqrt{2(f(x)-f_*)},
\end{equation}
where for simplicity $D_y :=  \n{Au^*}$. Notice that in
the consistent case a saddle point for \eqref{lin_con} is $(x^*, y^*)$
with $y^* = Au^*$ Thus, the above estimate recovers a more common
one $g(x)-g_*\geq -\n{y^*}\n{Ax-b} $ and therefore, in this case $D_y = \n{y^*}$.

\begin{proof}
    Let $\x\in S$. By the prox-inequality~\eqref{prox_charact} and
    linearity of $\nabla f$,
\[
\frac{1}{\la}\lr{x^{k+1}-x^k, \x-x^{k+1}} + \lr{\nabla f(x^k),\x-x^{k+1}} +
 k \lr{\nabla f(s^k), \x-x^{k+1}} \geq \frac 1 \s (g(x^{k+1}) -g_*).
\]
From Lemma~\ref{l:3points} it follows
\begin{equation}\label{identities}
\begin{aligned}
\lr{\nabla f(x^k),\x-x^{k+1}} & = 2f_* - f(x^{k+1})-f(x^k) + \frac 1 2
\n{A(x^{k+1}-x^k)}^2, \\
\lr{\nabla f(s^k),\x-x^{k+1}} & = 2f_* - f(x^{k+1})-f(s^k) + \frac 1 2
\n{A(x^{k+1}-s^k)}^2.
\end{aligned}
\end{equation}
Using these identities in the above inequality, we deduce
\begin{multline}\label{lin_alter-1}
(k+1)(f(x^{k+1})-f_*)  +(f(x^k)-f_*) + k(f(s^k)-f_*) + \frac{1}{\s}(g(x^{k+1}) -g_*) -\frac{1}{2}\n{A(x^{k+1}-x^k)}^2 \\ \leq \frac{1}{\la}\lr{x^{k+1}-x^k, \x-x^{k+1}}
 +\frac{k}{2}\n{A(x^{k+1}-s^k)}^2.
\end{multline}
Convexity of $F_{k+1}(x)= g(x)+ \s (k+1) (f(x)-f_*)$ and the
property~\eqref{eq:quad-id1} for $f$ yield
\begin{equation}
    \label{eq:f-quadr}
F_{k+1}(x^{k+1}) + kF_{k+1}(s^k) \geq (k+1)F_{k+1}(s^{k+1})
+\frac{\s k}{2}\n{A(x^{k+1}-s^k)}^2.
\end{equation}
Applying \eqref{eq:f-quadr} to \eqref{lin_alter-1} and using that
$kF_{k+1}(s^k)= kF_k(s^k) + \s k (f(s^k)-f_*)$, we obtain
\begin{multline}\label{lin_alter-2}
\frac{k+1}{\s}F_{k+1}(s^{k+1}) +(f(x^k)-f_*)
-\frac{1}{2}\n{A(x^{k+1}-x^k)}^2 - \frac{1}{\s} g_*  \leq \frac{1}{\la}\lr{x^{k+1}-x^k,
\x-x^{k+1}}+ \frac{k}{\s}F_k(s^k).
\end{multline}
Finally, using the cosine law \eqref{eq:cosine} and
$\n{A(x^{k+1}-x^k)}\leq \n{A}\n{x^{k+1}-x^k}$, we arrive at
\begin{align}\label{lin_alter-3}
\frac{1}{2\la}\n{x^{k+1}-\x}^2 &+\frac{k+1}{\s}(F_{k+1}(s^{k+1})-g_*) +
\frac{1-\la \n{A}^2}{2\la}\n{x^{k+1}-x^k}^2 \nonumber \\ &+(f(x^k)-f_*) \leq
\frac{1}{2\la}\n{x^{k}-\x}^2 + \frac{k}{\s} (F_{k}(s^k)-g_*),
\end{align}
which after multiplying by $\s$ and setting $\b = \frac{1-\la
\n{A}^2}{2\t}$ we can rewrite as
\begin{align}\label{lin_alter-32}
& \frac{1}{2\t}\n{x^{k+1}-\x}^2 +(k+1)(F_{k+1}(s^{k+1})-g_*) + \b
\n{x^{k+1}-x^k}^2 + \s(f(x^k)-f_*) \nonumber \\  \leq \ &
\frac{1}{2\t}\n{x^{k}-\x}^2 + k(F_{k}(s^k)-g_*).
\end{align}
Iterating the above, we obtain
\begin{align}
    \label{lin_alter-4}
& \frac{1}{2\t}\n{x^{k+1}-\x}^2 +
(k+1)\bigl(F_{k+1}(s^{k+1}) - g_*\bigr) +\b \sum_{i=0}^k
\n{x^{i+1}-x^i}^2 +\s \sum_{i=0}^k (f(x^i)-f_*) \\ \leq \ &
\frac{1}{2\t}\n{x^{0}-\x}^2 =\frac{D_x^2}{2\t},
\end{align}
where $D_x = \n{x^0-\x}$. It follows that
 \begin{equation}
     \label{lin_alter:5}
     F_{k}(s^{k}) - g_* \leq \frac{D_x^2}{2\t k}.
 \end{equation}

 (ii) Strong duality holds for~\eqref{lcg}. Applying the estimate obtained in \eqref{exist_dual}, we derive
\begin{align}    \label{liner-2}
\frac{1}{2\t}\n{x^{k}-\x}^2 +\s k^2(f(s^k)-f_*) -D_y k
\sqrt{2(f(s^k)-f_*)}  & + \b \sum_{i=0}^{k-1} \n{x^{i+1}-x^i}^2
                        \nonumber \\ &+\s
\sum_{i=0}^{k-1} (f(x^i)-f_*) \leq \frac{D_x^2}{2\t}.
\end{align} 
Let $t = k \sqrt{f(s^k)-f_*}$. Then from \eqref{liner-2} it
follows that $ \s t^2 - \sqrt 2 D_y t \leq D_x^2/2\t$ and therefore, we
have
\begin{equation}\label{eq:t}
    t = k \sqrt{f(s^k)-f_*} \leq \frac{D_y + \sqrt{D_y^2+ \s
    D_x^2/\t}}{\sqrt 2 \s}.
\end{equation}
By this, we show that $f(s^k)-f_* = O(1/k^2)$. Since
$t \mapsto \s t^2 - \sqrt 2 D_y t$ is bounded below by the constant
$-\frac{D_y^2}{2\s}$, from \eqref{liner-2} we conclude that $(x^k)$ is
bounded, $\n{x^{k}-x^{k-1}}\to 0$, $f(x^k)-f_* =O(1/k)$, and
\begin{equation}
    \label{eq:7_estim}
- \frac{D_y^2}{2 \s}  \leq   k(F_k(s^k) - g_* ) \leq \frac{D_x^2}{2\t}.
\end{equation}
From the last inequality we have that $|F_k(s^k)-g_*|=O(1/k)$ and since $f(s^k)-f_* =
O(1/k^2)$, we may deduce that  $|g(s^k)-g_*| = O(1/k)$.
By the definition of $z^k$, we also obtain
$f(z^k)-f_* = O(1/k^2)$.
It only remains to prove that $(x^k)$ is convergent. First we show
that all cluster points of $(x^k)$ belong to $S$. Let $(x^{k_i})$ be any
subsequence that converges to $\tilde x$. By the above, we know that
$\tilde x$ is feasible, that is $f(\tilde x)=f_*$. By  the prox-inequality,
we have
\begin{equation}\label{eq:lim}
     \lr{x^{k_i}-x^{k_i-1},\x-x^{k_i}} + k_i\la \lr{A^{\T}(Az^{k_i}-b), \x - x^{k_i}} \geq \t (g(x^{k_i})-g_*).
 \end{equation}
If we want to tend $k_i\to \infty$, we need to know how to estimate the
second term in the left-hand side of~\eqref{eq:lim}. Using that
$A^{\T}b = A^{\T}A\x$, we derive
\begin{align*}
    k_i \lr{A^{\T}(Az^{k_i}-b), \x - x^{k_i}} & = k_i \lr{A(z^{k_i} - \x),
    A(\x - x^{k_i})} \leq k_i \n{A(z^{k_i} - \x)} \cdot \n{A(x^{k_i}-\x)}\\
    & = 2k_i \sqrt{f(z^{k_i})-f_*}\sqrt{f(x^{k_i})-f_*} \to 0,
\end{align*}
due to the obtained asymptotics for $f(z^k)$ and $f(x^k)$. Hence,
passing to the limit in \eqref{eq:lim} and using that
$x^k-x^{k-1}\to 0$, we deduce $ 0 \geq \t (g(\tilde x)-g_*)$.  This
means that $\tilde x \in S$ and therefore, all cluster points of
$(x^k)$ belong to $S$. From \eqref{lin_alter-3} it follows that
\begin{equation}
    \label{bi:eq:11}
    \frac{1}{2\t}\n{x^{k+1}-\x}^2  + (k+1)(F_{k+1}(s^{k+1})-g_*) \\ \leq \frac{1}{2\t}\n{x^{k}-\x}^2 +  k(F_{k}(s^{k})-g_*).
\end{equation}
As $\bigl(k(F_k(s^k)-g_*)\bigr)_k$ is bounded below by \eqref{eq:7_estim}, we can
apply~Lemma~\ref{fejer} and conclude that the sequence $(x^k)$
converges to some element in $S$. The convergence of $(s^k)$ follows
immediately.

(iii) First, we observe that from \eqref{lin_alter:5} we have
\begin{equation}\label{fsk}
    \s (f(s^k)-f_*)\leq \frac{D_x^2}{2\t k^2} + \frac{g_* - g(s^k)}{k}.
\end{equation}
Since, $g$ is bounded below, we obtain that $f(s^k)-f_* = O(1/k)$.
Now we show that the sequence $(s^i)_{i\in \mathcal{I}}$ with
$\mathcal{I} = \{i\colon g(s^i) < g_*\}$ is bounded. To this end, we
use arguments from~\cite{solodov2007explicit}. By our assumption the
set $S = \{x\colon g(x)\leq g_*, f(x)\leq f_*\}$ is nonempty and
bounded. Consider the convex function
$\varphi(x) = \max\{g(x)- g_*, f(x)-f_* \}$. Notice that $S$ coincides
with the level set $\mathcal{L}(0)$ of $\varphi$:
\begin{equation*}
    S = \mathcal{L}(0) = \{ x\colon \varphi(x)\leq 0\}.
\end{equation*}
Since $\mathcal{L}(0)$ is bounded, $\mathcal{L}(c)=\{x\colon g(x)\leq
c\}$ is bounded for any $c\in \R$ as well. Fix any
$c\geq 0$ such that $f(s^{k})-f_*\leq c$ for all $k$. As
$g(s^{i})-g_*<0\leq c$ for $i \in \mathcal{I}$, we have that
$s^{i}\in \mathcal{L}(c)$, which is a bounded set. Hence,
$(s^{i})_{i\in \mathcal I}$ is bounded.

Now we prove the boundedness of the whole sequence $(s^k)_{k\in
\N}$. Let $M>0$ be any constant that bounds from above
$(\n{s^i})_{i\in \mathcal I}$ and $D_x + \n{\x}$.  For every index $k$ we have two
alternatives: either $g(s^k)< g_*$ or $g(s^k)\geq g_*$. If the latter
holds, then
\begin{equation*}
    \n{x^k} \leq \n{x^k-\x} +\n{\x} \leq \n{x^0-\x} + \n{\x} = D_x + \n{\bar x}\leq M,
\end{equation*}
where the second inequality holds because of \eqref{lin_alter-4}.  If
the former holds, then by the above arguments we know that
$\n{s^k}\leq M$. Assume that for the index $k$, $\n{s^k} \leq M$. If
for the index $k+1$, $g(s^{k+1})< g_*$, then we are done:
$k+1\in \mathcal{I}$ and hence, $\n{s^{k+1}}\leq M$. If
$g(s^{k+1})\geq g_*$, then $\n{x^{k+1}}\leq M$. Observe that
\begin{equation}
    \n{s^{k+1}} = \frac{\n{k s^k + x^{k+1}}}{k+1}\leq \frac{k}{k+1}M +
    \frac{1}{k+1}M = M,
\end{equation}
which completes the proof that $(s^k)$ is bounded. As
$f(s^k)-f_*=O(1/k)$, all cluster points of $(s^k)$ are feasible.
Taking the limit in \eqref{lin_alter:5} and using that $g$ is lsc, we
can also conclude that all cluster points of $(s^k)$ belong to
$S$. This guarantees that the whole sequence $(g(s^k))$ converges to
$g_*$. By this, one can improve the obtained estimate
for $f(s^k)-f_*$. In particular, now from \eqref{fsk} we have
$f(s^k)-f_* = o(1/k)$ and the proof is complete.
\end{proof}
\begin{remark}\
    \begin{enumerate}[(a)]

        \item Note that all known proofs of the PDHG algorithm
        cover only the case (ii), but even in that case they can show
        convergence only in the consistent case, \emph{i.e.,} when $f_*=0$.

        \item Let $g$ be the indicator function $\d_C $ of some closed
        convex set $C$. Then in this case \eqref{fsk} indicates that
        the performance of the primal-dual method does not depend on
        the ratio $\s/\t$ and the strong duality assumption.  This is
        natural, as now one can formulate problem~\eqref{lin_con} as a
        constrained least squares problem.

        \item Notice that one can easily make a solution set of
        problem \eqref{lcg} bounded by adding $\rho \n{x}^2$ to the objective $g$
        for some small number $\rho>0$. In this case, the solution will
        be unique $S = \{\x\}$, and hence $s^k\to \x$. Quite often it
        is possible to show that $\x$ will be not far from the actual
        solution of \eqref{lcg}.
    \end{enumerate}
\end{remark}

\subsection{Consequences} \label{sub:cons}
\paragraph{\textbf{Complexity estimates}} Consider the case when problem
\eqref{lin_con} is feasible, \emph{i.e.}, $f_* = 0$, and the strong duality
holds. For this case we will derive explicit estimates for $\e$-optimality
and compare them with existing ones.

A vector $x\in \R^n$ is called an $\e$--approximate solution of
\eqref{lin_con} if it satisfies
\begin{equation}
    \label{eq:approx}
    |g(x) - g_*| \leq \e \quad \text{and}\quad \n{Ax-b} \leq \e.
\end{equation}
Let $(x^*, y^*)$ be any saddle point of \eqref{lin_con}. As  $ \n{As^k-b} =
\sqrt{2(f(s^k)-f_*)}$, from \eqref{eq:t} we derive
\begin{equation}
    \label{feas_gap}
    \n{As^k-b} \leq \frac{D_y + \sqrt{D_y^2 +\s D_x^2 /\t}}{\s k},
  \end{equation}
  where we recall $D_x = \n{x^0 - x^*}$ and $D_y = \n{y^*}$.
Similarly, we obtain
\begin{equation}
    \label{gap_g}
    -\frac{D_y^2 + D_y\sqrt{D_y^2+\s D_x^2 /\t}}{\s k} \leq
    g(s^k)-g_* \leq \frac{D_x^2}{2\t k},
\end{equation}
where the first inequality follows from~\eqref{exist_dual}  and the
second one from~\eqref{eq:7_estim}.
From \eqref{feas_gap} and \eqref{gap_g} it is clear that we need
$O(1/\e)$ iterations to obtain $\e$-solution.  Quite remarkably, there
are several papers \cite{tran2018smooth,dvurechensky2018computational}
for solving problem~\eqref{lin_con} that also use Tseng's method,
where it is applied to the dual smoothed problem.  Moreover, we
observe that our estimate for the feasibility gap and the lower
estimate for the objective $g(s^k)-g_*$ are exactly the same as the
ones obtained in \cite[Theorem 3]{tran2018smooth}.  However, our upper
bound for the objective $g(s^k)-g_*$ is still tighter than the one in
\cite{tran2018smooth}. More specifically, Algorithm~1 proposed in
\cite{tran2018smooth} generates the sequence $(s^k)$, for which (using
our notation) it holds 
\[g(s^k)-g_* \leq \frac{D_x^2}{2\t k} + D_y  \n{As^k-b} +
    \frac{D_y^2}{\s (k+1)}. \]
Overall, this also leads to $O(1/k)$ rate, but due to additional two
terms in the above inequality, our constant is better. Since the authors
in~\cite{tran2018smooth} claim that their method achieves the
best-known rate for the non-smooth settings, we believe our estimates
improve their findings.  \smallskip

\paragraph{\textbf{Implementation details}} It is interesting to
remark that the schemes \eqref{pd2} or \eqref{eq:pd-simpler} require
even less memory than the original PDHG method. In particular, at
every moment we have to keep only two vectors $x^k, s^k\in \R^n$. In
contrast, for the PDHG algorithm, as one can see from \eqref{eq:1}, we
have to store $x^k, x^{k-1}\in \R^n$ and $y^k\in \R^m$. Moreover, in
the case $m \ll n$, it might be more efficient to switch to the dual
variables by using $\tilde x^k = Ax^k$, $\tilde s^k = As^k$. In this
notation, the scheme \eqref{eq:pd-simpler} can be rewritten as
\begin{equation}    \label{pd2-dual}
    \begin{cases}
        \tilde x^k  = Ax^k\\
        x^{k+1}  = \prox_{\t g}(x^k -\la A^{\T}(\tilde x^k + k \tilde
        s^k - b))\\
        \tilde s^{k+1} =(\tilde x^{k+1}+k \tilde s^k)/(k+1).
\end{cases}
\end{equation}
This scheme preserves the same amount of computation per iteration,
but requires us to store only one primal variable $x^k\in \R^n$ and
two dual variables $\tilde x^k, \tilde s^k \in \R^{m}$, which in the
case $2m<n$ is cheaper than the schemes \eqref{eq:1} or \eqref{pd2}
do.  Finally, in the case $m\gg n$ there is another possibility to
precompute $A^{\T}A\in \R^{n\times n}$ and use it in all iterations of
\eqref{pd2} or \eqref{eq:pd-simpler}.

\paragraph{\textbf{Connection to penalty methods}} Another approach to solve
\eqref{lin_con} or more general problem \eqref{lcg} is the
penalty method. It consists in solving a sequence of unconstrained
optimization problems
\begin{equation}\label{lin_Fk}
    \min_x g(x) + \frac{\rho_k}{2} \n{Ax-b}^2, \qquad \rho_k >0
\end{equation}
for some increasing sequence $\rho_k \to \infty $ as $k\to
\infty$. Intuitively it is clear that with larger $\rho_k$, solutions
of \eqref{lin_Fk} become closer to a solution of our constrained
problem \eqref{lcg}. For more rigorous treatment on this subject, see
\cite{nesterov2013introductory,fletcher2013practical}. In general,
penalty methods do not use duality arguments, although one still needs
them in order to obtain some convergence rates or even to prove global
convergence~\cite{fletcher2013practical}. As an exception, there is a
recent paper~\cite{necoara2015complexity} that studies the conic
optimization problem without assuming that there exists a Lagrange
multiplier. The authors applied the accelerated gradient method for
the penalized objective (although different from \eqref{lin_Fk}) and
derived $O(1/k^{\frac{2}{3}})$ estimate for the feasibility gap.

Generally speaking, we cannot solve just one problem~\eqref{lin_Fk}
for $\rho_k$ large enough. First, because we do not know which
$\rho_k$ is large enough for approximation of the true solution. And
second, because solving~\eqref{lin_Fk} in practice becomes difficult
for large $\rho_k$. Thus, penalty methods require solving a sequence
of optimization problems \eqref{lin_Fk}, which can be quite
costly. Clearly, for a specific choice $\rho_k = k \s $ problem
\eqref{lin_Fk} becomes nothing more than just $\min_x F_k(x)$.  What
we have shown is that the primal-dual method provides a nice
alternative to penalty methods. Instead of solving a sequence of
problems \eqref{lin_Fk}, it runs one iteration of something similar to
the proximal gradient method for each of the problems~\eqref{lin_Fk}
with $\rho_k = k\sigma$. In the literature this is known as a
\emph{diagonal penalty} method, see a nice overview of such methods in
\cite{garrigos2018iterative}. In general, diagonal penalty methods are
a modification of some known algorithms with an appropriate penalty
function; proving their convergence can be tricky and usually it
requires additional assumptions like strong convexity.  Thus, it is
quite remarkably that the vanilla PDHG method is unintentionally a
diagonal penalty method. We also want to note that some other recent
methods~\cite{tran2018smooth,tran2018proximal} have also this property
of a diagonal penalty method.
\smallskip

\paragraph{\textbf{Connection to inverse problems}}  A central problem in inverse
problems is solving a linear system $Ax^\dagger +\e = b$, where the
matrix $A \in \R^{m\times n}$ is given, the vector $b\in \R^m$ is
observed, and $\e\in \R^m$ is some noise. Since in most cases these
problems are ill-posed and the noise $\e$ is unknown, in order to
solve them we have to impose an appropriate regularization. The most
common approach is to consider the Tikhonov regularization:
\begin{equation}
    \label{eq:reg}
    \min_x g(x) + \frac{\gamma}{2}\n{Ax - b}^2,
\end{equation}
where $g$ is the regularizer that promotes some desirable properties
of a solution such as sparsity, smoothness, etc., and $\c >0$ is the
regularization parameter. The question of how to choose this parameter
is the main concern of such approach. In a theory the best thing would
be to solve a sequence of problems \eqref{eq:reg} with different
$(\c_k)_{k\in \N}$ and choose among solutions the best; apparently it
is not the most practical way. In our notation it means that we would
like to choose the ``best'' $\hx^k$ (whatever it means) among all
$k\in \N$ such that
\begin{equation}\label{hx^k}
\hx^k\in \argmin_x F_k(x):= g(x) + \s k (f(x)-f_*).
\end{equation}
It is clear that the problem \eqref{eq:reg} with $\c = \s k$ is
equivalent to \eqref{hx^k}.

Instead of choosing parameter $\gamma$ or solving a sequence of
problems \eqref{hx^k}, one can apply the PDHG directly to the problem
\eqref{lin_con}. Of course due to the noise, the linear system $Ax=b$
might be inconsistent, however this is not our concern, as we have
already shown that the iterates $(x^k)$ of the PDHG method will still
converge to a solution of a more general problem
\eqref{lcg}. Moreover, we can show that
$|F_k(\hx^k) - F_k(s^k)| = O(1/k)$.

Assume that the strong duality holds. The same estimation as in
Theorem~\ref{th_lin_cons} (ii) provides us
\begin{equation}\label{eq:73}
    - \frac{D_y^2}{2\s}  \leq   k(F_k(\hx^k) - g_* ).
\end{equation}
From this it follows that $g_* \leq \frac{D_y^2}{2\s k} +
F_k(\hx^k)$. Since $F_{k}(\hx^{k}) \leq F_{k}(s^{k})$, from \eqref{eq:7_estim} we have
\begin{equation}\label{eq:72}
    - \frac{D_y^2}{2\s k}  \leq F_k(\hx^k) - g_*\leq    F_k(s^k) - g_*
    \leq \frac{D_x^2}{2\t k}.
\end{equation}
 Combining the latter two inequalities, we can conclude
\begin{equation}
    \label{conclude}
    F_k(\hx^k)\leq F_k(s^k) \leq \frac{D_x^2}{2\t k} + \frac{D_y^2}{2\s
    k} + F_k(\hx^k),
\end{equation}
and hence $|F_k(s^k) - F_k(\hx^k)| = O(1/k)$. This means that by
applying the PDHG algorithm only one time for one problem
\eqref{lin_con}, we approach to each of the solutions $\hx^k$ of the
regularized problem.
\smallskip

\paragraph{\textbf{Distributed optimization}}
Here we will show that the new scheme~\eqref{eq:pd-simpler} applied to
the distributed optimization enjoys much better properties than the
original PDHG method.  Assume we have a connected simple graph
$G = (V,E)$ of $n=|V|$ computing units $v_i$, each having access to a
convex lsc function $g_i\colon \R^{d} \to (-\infty, +\infty]$. Our aim
is to find a consensus on the minimum of the aggregate objective
$g_1(x_1) +\dots + g_n(x_n)$ in a decentralized way. This problem can
be written as
\begin{equation}
    \label{eq:distrib}
    \min_{x_1,\dots, x_n} g_1(x_1)+\dots + g_n(x_n) \quad \text{s.t.}\quad x_1=\dots
    = x_n.
\end{equation}
In order to rely on the decentralized computation, we only allow
communication between adjacent nodes. The standard way to impose the
network topology is to reformulate problem \eqref{eq:distrib}
exploiting the \emph{Laplacian matrix} $L \in \R^{n\times n}$ of $G$,
which is zero everywhere, except $L_{ii} = \deg v_i$ and $L_{ij} = -1$
if $(i,j)\in E$.  Then one can formulate problem \eqref{eq:distrib} as
\begin{equation}\label{eq:distr1}
  \min_{x\in \R^{n\times d}} \quad g(x)   \quad \text{s.t.} \quad L x = 0,
\end{equation}
where $x = (x_1,\dots, x_n)^{\T} \in \R^{n\times d}$,
$g(x) = g_1(x_1) + \dots +g_n(x_n)$.  It is not difficult to
see~\cite{merris1994laplacian} that condition $Lx=0$ is equivalent to
$x_1=\dots = x_n$. Notice that multiplication of $L$ with the current
iterate $x^k$ amounts to the exchange of information between the
respective nodes.  Of course, problem~\eqref{eq:distr1}
is a particular case of \eqref{lin_con}. The PDHG~\eqref{eq:1} applied
to \eqref{eq:distr1} requires each node $v_i$ to store
$x_i^{k}, x_{i}^{k-1}$, and $y_i^k$ in each iteration. Most important
is that we have two matrix-vector multiplications $L\x^k$ and
$Ly^{k+1}$, which implies two communications per iteration.

Can scheme~\eqref{eq:pd-simpler} provide us something new? In fact,
yes. Note that instead of problem~\eqref{eq:distr1} one can consider
an equivalent problem
\begin{equation}\label{eq:distr2}
  \min_{x\in \R^{n\times d}} \quad g(x)   \quad \text{s.t.} \quad  \sqrt{L} x = 0.
\end{equation}
Since $L$ is symmetric semidefinite, $\sqrt{L}$ is well-defined and
symmetric semidefinite as well. As $\sqrt L \sqrt L = L$, the scheme~\eqref{eq:pd-simpler}
boils down to
\begin{equation}\label{eq-distr:pd-simpler}
      \begin{cases}
        x^{k+1} = \prox_{\t g}(x^k - \la L(x^k +  ks^k))\\
        s^{k+1} = (x^{k+1} + ks^k)/(k+1),
    \end{cases}
\end{equation}
which means that we need only one communication between the nodes! The
reason why we could not apply the original PDHG method \eqref{eq:1} to
problem \eqref{eq:distr2} is obvious: in general $\sqrt L x^k$ is not
related to communication of nodes: it might require exchanging of
information between nodes that are  not directly connected.

It remains to notice that the complexity estimates will be also much
better for problem~\eqref{eq:distr2} than for
\eqref{eq:distr1}. Indeed, the latter requires $\t \s \n{L}^2<1$ while
for the former we need only $\t \s \n{L}<1$. For Laplacian matrices of
graphs we know \cite{merris1994laplacian} that
$\n{L} \geq d_{\max}+1>1$, where $d_{\max}$ denotes the largest degree
of the nodes of $G$.  If we assume that the strong duality holds for
\eqref{eq:distr1} and a $(x^*, u^*)$ is a saddle point, then it also
holds for \eqref{eq:distr2} and $(x^*, \sqrt L u^*)$ would be a
respective saddle point. Thus, the bound for $D_y$ in
\eqref{exist_dual} for \eqref{eq:distr2} is also not bigger than the
one for \eqref{eq:distr1}. Hence, when the strong duality holds, from
\eqref{feas_gap} and \eqref{gap_g} we can conclude that the complexity
for \eqref{eq:distr2} is better than the one for \eqref{eq:distr1}.

For instance, in~\cite{lan2017communication} the authors develop a
PDHG-based algorithm for decentralized communication (for the case when
$\prox_g$ is not easy to compute). However, they use a standard form
of the PDHG and because of that each iteration of their algorithm
needs two communications per iteration.

The idea to use constraints $\sqrt L x = 0$ in \eqref{eq:distr2} is
quite standard, see for example
\cite{uribe2017optimal,jakovetic2015linear,pmlr-v70-scaman17a}. However such algorithms
require either more restrictive assumptions (strong convexity,
dual-friendliness, etc.) or they have more expensive iterations. Also,
using the results from \cite{luke2018block}, one can derive a
stochastic extension of \eqref{eq-distr:pd-simpler} where in every
iteration only a small random subset of nodes communicate between
themselves. 
As a remark, we note that the above discussion will be
also valid if instead of $L$ one considers a more general weighted
Laplacian matrix.

\section{Generalization} \label{sec:gen}
\subsection{Additional smooth term}
Assume now that we are given more structure in  problem \eqref{lin_con}:
\begin{equation}\label{lin_con2}
  \min_{x\in \R^{n}} \quad g(x) + h(x)   \quad \text{s.t.} \quad Ax = b,
\end{equation}
where in addition to the previous settings we assume that
$h\colon \R^n\to \R$ is a convex differentiable function with
$\beta$--Lipschitz gradient, that is
\[\n{\nabla h(u) - \nabla h(v)}\leq \beta \n{u - v}\qquad
    \forall u, v\in \R^n.\] In most cases computing the $\prox_{g +h}$
is not practical anymore, thus the vanilla PDHG method will not be
efficient for this problem. Condat and V\~u in
\cite{Condat2013,vu2013splitting} proposed an extension of the PDHG
algorithm to deal with such cases. Applied to \eqref{lin_con2}, this
algorithm is given by
\begin{equation}\label{eq:12}
    \begin{cases}
        y^{k+1}  =  y^k + \sigma (A \x^k -b)\\
        x^{k+1}  = \prox_{\t g}(x^k - \tau (A^{\T} y^{k+1} + \nabla h(x^k))).
    \end{cases}
\end{equation} 
Its convergence can be proved under the assumptions that the solution
set is nonempty, the strong duality holds and $\t \s \n{A}^2 < 1
-\t \beta$.

It is clear that in the same way as in section~\ref{sec:lin:appr} one
can transform \eqref{eq:12} into entirely primal algorithm:
\begin{equation}    \label{pd22}
\begin{cases}
    z^k  = (x^k+ks^{k})/(k+1)\\
    x^{k+1}  = \prox_{\t g}(x^k -\t \nabla h(x^k)- (k+1)\la \nabla f(z^k))\\
     s^{k+1} = (x^{k+1}+ks^k)/(k+1).
\end{cases}
\end{equation}
We will not repeat the proof, rather give a key ingredient. To this
end, we first recall the inequality from the descent
lemma~\cite{nesterov2013introductory} for $\b$--smooth function $h$:
\begin{equation}
    \label{desc_lemma}
    h(u)-h(v) - \lr{\nabla h(v), u-v} \leq \frac \beta 2 \n{u-v}^2
    \quad \forall u,v\in \R^n.
\end{equation}
The key ingredient is  the following estimation
\begin{align}\label{ineq_h}
  \lr{\nabla h(x^k), \x-x^{k+1}} & = \lr{\nabla h(x^k), \x-x^{k}} + \lr{\nabla h(x^k), x^k-x^{k+1}} \\ & \leq
                                  [h(\x)-h(x^k)]  + [h(x^k) - h(x^{k+1})
                                  +\frac{\beta}{2}\n{x^{k+1}-x^k}^2] \\ & =                                   h(\x) - h(x^{k+1})
                                  +\frac{\beta}{2}\n{x^{k+1}-x^k}^2 
\\ & \leq k\bigl(h(s^k)-h(\x)\bigr) - (k+1)\bigl(h(s^{k+1})-h(\x)\bigr) + \frac{\beta}{2}\n{x^{k+1}-x^k}^2,
\end{align}
where the second line follows from convexity of $h$ and decsent
inequality~\eqref{desc_lemma} and the fourth one follows from
convexity of $h$ and the definition of $s^{k+1}$.  Combining this
inequality with the similar ones as in the proof of
Theorem~\ref{th_lin_cons} we can show convergence
of~\eqref{pd22}. Evidently, in the same way we can show that
algorithm~\eqref{pd22}, and hence,~\eqref{eq:12}, in fact solves a
more general problem
\begin{equation}\label{lin_con_gen2}
  \min_{x\in \R^{n}} \quad g(x) + h(x)   \quad \text{s.t.} \quad x \in
  \argmin f,
\end{equation}
where recall that $f(x) = \frac 1 2 \n{Ax-b}^2$.

\subsection{$g$ is strongly convex}
\label{sec:strong}
When $g$ is $\c$--strongly convex, we can obtain even better
convergence rates. Although our results presented below will be valid
for the general case as in~\eqref{lin_con_gen2}, for the clarity of
presentation we consider the case when $h\equiv 0$. Hence, now our
problem reads as:
\begin{equation}\label{strong}
    \min_{x\in \R^n} \quad   g(x) \quad   \text{s.t.} \quad x\in \argmin f,
\end{equation}
where $g$ is $1$--strongly convex function that we assume without loss
of generality.

First, let us consider the case when the linear system $Ax=b$ is
consistent and strong duality holds. In this case, one can apply the
accelerated PDHG method \cite{chambolle2011first}:
\begin{equation}\label{acc-pd}
    \begin{cases}
        y^{k+1}  =  y^k + \sigma_k (A \x^k -b)\\
        x^{k+1}  = \prox_{\tau_k g}(x^k - \tau_k A^{\T} y^{k+1}),
    \end{cases}
\end{equation}
where $\x^k = x^k + \th_k (x^k-x^{k-1})$, $\th_k =
\frac{\t_k}{\t_{k-1}}$ and
\begin{equation}    \label{eq:param}
\t_{k} = \frac{\t_{k-1}}{\sqrt{1+\t_{k-1}}}, \quad  \t_k \s_k = \la,
\quad \la \n{A}^2\leq 1 \qquad \forall k\geq 0.
\end{equation}
Iterating the first equation in \eqref{acc-pd}, one can derive
\begin{align*}
       y^{k+1} & = y^k + \s_k (A\x^k-b) = y^{k-1} + A(\s_k\x^k+\s_{k-1}\x^{k-1})
       -(\s_k+\s_{k-1})b = \dots \\ &  = y^0 + A(\s_k\x^k+\dots +
       \s_0\x^0) - (\s_k + \dots + \s_0)b.
\end{align*}
Similarly as in section~\ref{sec:lin:appr}, one may introduce $z^k$,
defined by $z^0 = x^0$, $z^1 = x^1$ and
\begin{equation*}
    z^k = \frac{\s_k \x^k+\dots +\s_0 \x^0}{\s_k+\dots + \s_0} =
    \frac{(\s_k+\s_{k-1}) x^k+ \s_{k-2}x^{k-1}+\dots +\s_0 x^1}{\s_k+\dots + \s_0},
\end{equation*}
for $k\geq 2$. Here we have used that
$\x^k = x^k + \th_k (x^k-x^{k-1}) = x^k + \frac{\s_{k-1}}{\s_k}
(x^k-x^{k-1}) $. Let $\Sigma_k:= \s_k+\dots + \s_0$ for $k\geq 0$ and
$\Sigma_{-1} = 0$. Define sequence $(s^k)$ as $s^0=x^0$ and
$s^{k} = \frac{\s_{k-1}x^k + \dots + \s_0 x^1}{\Sigma_{k-1}}$. For
simplicity, we again assume that $y^0 = 0$, thus
$y^{k+1} = \Sigma_{k}(Az^k-b)$ for all $k\geq 0$. Then the primal-dual
scheme~\eqref{acc-pd} might be written in the primal form:
\begin{equation} \label{lin_cons:accel}
    \begin{cases}
    z^k  = (\s_k x^k  + \Sigma_{k-1} s^k)/\Sigma_k\\
    x^{k+1} = \prox_{\t_k g}(x^k - \t_k \Sigma_k \nabla f(z^k))\\
    s^{k+1} = (\s_k x^{k+1}+ \Sigma_{k-1}s^k)/\Sigma_k,
\end{cases}
\end{equation}
where $k\geq 0$, $\t_k, \s_k$ satisfy \eqref{eq:param} and $\Sigma_k = \s_k+\dots+\s_0$. We show that this algorithm in fact solves \eqref{strong}.

Let $g_*$ be the optimal value of \eqref{strong},
$F_k(x) = g(x) + \Sigma_k(f(x)-f_*)$ be the penalty function, and
$\hx^k$ be the unique minimizer of $F_k$.
\begin{theorem}\label{th:g-strong}
    Let $(x^k)$, $(s^k)$ be generated by
    \eqref{lin_cons:accel}, $\la \n{A}^2 \leq 1$, and the solution set
    $S = \{\x\}$. Then it holds
    
    \begin{enumerate}[(i)]
        \item $(s^k)$ converges to $\x$,
        $F_k(\hat x^k) - g_* \leq F_k(s^k) - g_* =O(1/k^2)$,
        $f(s^k)-f_* = o(1/k^2)$.
        \item If strong duality holds for problem~\eqref{strong}, then
        $(x^k)$ also converges to $\x$ at the rate
        $\n{x^k-\x} = O(1/k)$ and  $f(x^k)-f_*=O(1/k^3)$, 
        $f(s^k)-f_* = O(1/k^4)$, $|F_k(s^k)-g_*|=O(1/k^2)$,
        $|g(s^k)-g_*| = O(1/k^2)$.
    \end{enumerate}  
\end{theorem}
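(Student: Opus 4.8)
The plan is to mirror the proof of Theorem~\ref{th_lin_cons}, replacing the constant stepsize $(\t,\s)$ by the varying $(\t_k,\s_k)$ and exploiting the strengthened prox-inequality~\eqref{prox_charact-strong} furnished by $1$--strong convexity of $g$. The first step is to record the asymptotics forced by~\eqref{eq:param}. The recursion $\t_k=\t_{k-1}/\sqrt{1+\t_{k-1}}$ is equivalent to $\tfrac{1}{\t_k^2}=\tfrac{1}{\t_{k-1}^2}+\tfrac{1}{\t_{k-1}}$, whence $\t_k\sim 2/k$, $\s_k=\la/\t_k\sim \la k/2$, and $\Sigma_k\sim \la k^2/4$. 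This quadratic growth of $\Sigma_k$ is exactly what upgrades the $O(1/k)$ of Theorem~\ref{th_lin_cons} to $O(1/k^2)$: the target estimate is again $\Sigma_k(F_k(s^k)-g_*)\le \text{const}$, only now $\Sigma_k\sim k^2$.

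The core is a one-step Lyapunov inequality. Applying~\eqref{prox_charact-strong} to $x^{k+1}=\prox_{\t_k g}(x^k-\t_k\Sigma_k\nabla f(z^k))$ at the solution $\x$, dividing by $\t_k$, and using $\Sigma_k\nabla f(z^k)=\s_k\nabla f(x^k)+\Sigma_{k-1}\nabla f(s^k)$ (linearity of $\nabla f$ together with $z^k=(\s_k x^k+\Sigma_{k-1}s^k)/\Sigma_k$), I would rewrite the two inner products via Lemma~\ref{l:3points}, just as in~\eqref{identities}; strong convexity now contributes the extra summand $\tfrac12\n{x^{k+1}-\x}^2$ that was absent before. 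I would then invoke the strongly convex Jensen inequality for $F_{k+1}$ along $s^{k+1}=(\s_k x^{k+1}+\Sigma_{k-1}s^k)/\Sigma_k$ (here~\eqref{eq:quad-id1} and $1$--strong convexity of $g$ produce the gains $\n{A(x^{k+1}-s^k)}^2$ and $\n{x^{k+1}-s^k}^2$), convert $F_{k+1}(s^k)$ to $F_k(s^k)$ using $F_{k+1}-F_k=\s_{k+1}(f-f_*)$, and close with the cosine law~\eqref{eq:cosine} applied to $\tfrac{1}{\t_k}\lr{x^{k+1}-x^k,\x-x^{k+1}}$. The resulting inequality has coefficient $\tfrac{1}{2\t_k}+\tfrac12$ on $\n{x^{k+1}-\x}^2$, and the telescoping succeeds precisely because this dominates the coefficient $\tfrac{1}{2\t_{k+1}}$ demanded at the next step; squaring, this is exactly the defining recursion, since
\[ \Bigl(\tfrac{1}{\t_k}+1\Bigr)^2=\tfrac{1}{\t_k^2}+\tfrac{2}{\t_k}+1\ \ge\ \tfrac{1}{\t_k^2}+\tfrac{1}{\t_k}=\tfrac{1}{\t_{k+1}^2}. \]
This coefficient-matching is the technical heart, and verifying it is where the accelerated parameter choice~\eqref{eq:param} pays off. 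Telescoping then yields $\Sigma_k(F_k(s^k)-g_*)\le C$ together with summability of the residual terms, and since $\hx^k$ minimizes $F_k$ one has $F_k(\hx^k)\le F_k(s^k)$, giving the chain in (i).

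For the convergence in (i) I would combine penalty theory with strong convexity: $\hx^k\to\x$ by boundedness of $S=\{\x\}$, and $\tfrac12\n{s^k-\hx^k}^2\le F_k(s^k)-F_k(\hx^k)\le F_k(s^k)-g_*=O(1/k^2)$ forces $s^k\to\x$; lower semicontinuity of $g$ and the upper bound $g(s^k)-g_*\le F_k(s^k)-g_*$ then give $g(s^k)\to g_*$, so that dividing $\Sigma_k(f(s^k)-f_*)=F_k(s^k)-g_*-(g(s^k)-g_*)$ by $\Sigma_k\sim k^2$ produces $f(s^k)-f_*=o(1/k^2)$, exactly as in Theorem~\ref{th_lin_cons}(iii). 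For (ii) I would first sharpen~\eqref{exist_dual} using the subgradient $-A^{\T}Au^*\in\partial g(\x)$ and $1$--strong convexity to
\[ g(x)-g_*\ \ge\ -D_y\sqrt{2(f(x)-f_*)}+\tfrac12\n{x-\x}^2, \]
evaluate at $s^k$, and combine $g(s^k)-g_*\le F_k(s^k)-g_*=O(1/k^2)$ with $f(s^k)-f_*$ to get $\n{s^k-\x}=O(1/k)$. Plugging the duality bound into the telescoped inequality as in~\eqref{liner-2} and setting $t=\Sigma_k\sqrt{f(s^k)-f_*}$ gives $t^2-\sqrt2 D_y t\le C$, hence $t=O(1)$ and $f(s^k)-f_*=O(1/\Sigma_k^2)=O(1/k^4)$; the two-sided bound $-D_y^2/2\le \Sigma_k(F_k(s^k)-g_*)\le C$ then yields $|F_k(s^k)-g_*|=O(1/k^2)$ and $|g(s^k)-g_*|=O(1/k^2)$. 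The delicate point I expect to be the main obstacle is the \emph{sharp} rates for the non-averaged iterate: the crude Lyapunov bound $\tfrac{1}{2\t_{k-1}}\n{x^k-\x}^2\le C$ only gives $\n{x^k-\x}=O(1/\sqrt k)$, and likewise the relation $x^{k+1}-\x=(s^k-\x)+\tfrac{\Sigma_k}{\s_k}(s^{k+1}-s^k)$ only gives a crude feasibility estimate. Obtaining the claimed $\n{x^k-\x}=O(1/k)$ and $f(x^k)-f_*=O(1/k^3)$ requires showing that the Lyapunov functional approaches its limit at rate $1/k$ (so that its $x$-part decays accordingly) and tracking the near-cancellation in the $x$--$s$ relation; this careful bookkeeping of the varying coefficients, rather than any new idea, is the part demanding the most care.
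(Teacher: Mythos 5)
Your overall route is indeed the paper's: the strengthened prox-inequality~\eqref{prox_charact-strong}, the identities from Lemma~\ref{l:3points}, the quadratic/Jensen inequality for $F_{k+1}$, the cosine law, telescoping, and then the duality trick $t^2-\sqrt 2 D_y t\le C$ with $t=\Sigma_{k-1}\sqrt{f(s^k)-f_*}$. But there is a genuine gap in how you telescope, and it propagates. You compare the coefficient $\frac{1}{2\t_k}+\frac12$ against $\frac{1}{2\t_{k+1}}$, i.e., you chain the one-step inequality in \emph{unnormalized} form. In that form the penalty terms do not match: step $k$ produces $\frac{\Sigma_k}{\s_k}\bigl(F_{k+1}(s^{k+1})-g_*\bigr)$ on the left, while step $k+1$ requires $\frac{\Sigma_k}{\s_{k+1}}\bigl(F_{k+1}(s^{k+1})-g_*\bigr)$ on the right; since $\s_{k+1}>\s_k$, discarding the excess is legitimate only if $F_{k+1}(s^{k+1})-g_*\ge 0$ --- and this is exactly the quantity that can be negative (that is why the paper must prove the lower bound~\eqref{eq:7}), and in part (i) no strong duality is available to control it. The paper avoids this entirely: it first multiplies~\eqref{str:lin_alter-3} by $\s_k$ and uses the \emph{exact} identity $\frac{\t_k\s_{k+1}}{\s_k\t_{k+1}}=\frac{\t_k^2}{\t_{k+1}^2}=1+\t_k$, so that the $x$-coefficients match with equality, $\frac{\s_k(1+\t_k)}{2\t_k}=\frac{\s_{k+1}}{2\t_{k+1}}$, and the penalty coefficient becomes $\Sigma_k$ on both sides; the resulting recursion~\eqref{lin_cons:accel-2} telescopes with no sign condition whatsoever. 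Your final claim $\Sigma_k(F_k(s^k)-g_*)\le C$ is the conclusion of this normalized telescoping, not of the one you verified.

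The mis-normalization has a second consequence: the ``main obstacle'' you flag at the end is not an obstacle. The correctly telescoped Lyapunov function carries the term $\frac{\s_k}{2\t_k}\n{x^k-\x}^2$ with $\frac{\s_k}{\t_k}=\frac{\la}{\t_k^2}\sim k^2$, so under strong duality~\eqref{lin_cons:accel-bound} yields at once $\n{x^k-\x}^2\le\frac{\t_k}{\s_k}(\la D_x^2+D_y^2)=O(1/k^2)$, i.e., $\n{x^k-\x}=O(1/k)$, and $f(x^k)-f_*=O(1/k^3)$ follows from the summability of $\s_i^2(f(x^i)-f_*)$ in the same bound, since $\s_k^2\sim k^2$. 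Your bound $\frac{1}{2\t_{k-1}}\n{x^k-\x}^2\le C$, giving only $O(1/\sqrt k)$, is an artifact of the wrong normalization, so no additional ``bookkeeping'' argument exists or is needed. Finally, in part (i) your chain $\frac12\n{s^k-\hx^k}^2\le F_k(s^k)-F_k(\hx^k)\le F_k(s^k)-g_*$ is incorrect: since $F_k(\hx^k)=\min_x F_k(x)\le F_k(\x)=g_*$, the second inequality points the wrong way. It can be repaired by additionally proving $g_*-F_k(\hx^k)\to 0$ (a standard penalty argument using $f(\hx^k)\to f_*$, lower semicontinuity of $g$, and $S=\{\x\}$), but as written the step fails; the paper instead reuses the boundedness/cluster-point argument of Theorem~\ref{th_lin_cons}(iii), which applies because strong convexity makes $g$ bounded below and $S$ a singleton.
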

\begin{proof}
 By the prox-inequality~\eqref{prox_charact-strong},
\begin{multline}\label{str:0}
   \frac{1}{\t_k} \lr{x^{k+1}-x^k, \x-x^{k+1}} +  \s_k \lr{\nabla f(x^k),\x-x^{k+1}} +
 \Sigma_{k-1} \lr{\nabla f(s^k), \x-x^{k+1}} \\ \geq g(x^{k+1}) -g(\x)
+ \frac 1 2 \n{x^{k+1}-\x}^2.
\end{multline}
Using identities~\eqref{identities}, we obtain
\begin{multline}\label{str:lin_alter-1}
\Sigma_k(f(x^{k+1})-f_*)  +\s_k (f(x^k)-f_*) +
\Sigma_{k-1}(f(s^k)-f_*) \\+ (g(x^{k+1}) -g_*) + \frac 1 2
\n{x^{k+1}-\x}^2  -\frac{\s_k}{2}\n{A(x^{k+1}-x^k)}^2  \\ \leq \frac{1}{\t_k}\lr{x^{k+1}-x^k, \x-x^{k+1}} +\frac{\Sigma_{k-1}}{2}\n{A(x^{k+1}-s^k)}^2.
\end{multline}
Convexity of $F_{k+1}(x)= g(x)+ \Sigma_k (f(x)-f_*)$ and the
property~\eqref{eq:quad-id1} for $f$ yield
\begin{equation}\label{str:1}
\s_k F_{k+1}(x^{k+1}) + \Sigma_{k-1} F_{k+1}(s^k) \geq \Sigma_k F_{k+1}(s^{k+1})
+\frac{\s_k \Sigma_{k-1}}{2}\n{A(x^{k+1}-s^k)}^2.
\end{equation}
Applying \eqref{str:1} to \eqref{str:lin_alter-1} and using that
$\Sigma_{k-1} F_{k+1}(s^k)= \Sigma_{k-1} F_k(s^k) +  \s_k\Sigma_{k-1}(f(s^k)-f_*)$, we obtain
\begin{align}\label{lin_alter-22}
\frac{\Sigma_k}{\s_k}F_{k+1}(s^{k+1})-g_*) + \s_k (f(x^k)-f_*)
& -\frac{\s_k}{2}\n{A(x^{k+1}-x^k)}^2 + \frac 1 2 \n{x^{k+1}-\x}^2 \nonumber \\ &\leq \frac{1}{\t_k}\lr{x^{k+1}-x^k,
\x-x^{k+1}}+ \frac{\Sigma_{k-1}}{\s_k}(F_k(s^k)-g_*).
\end{align}
From the Cosine Law \eqref{eq:cosine} and
$\n{A(x^{k+1}-x^k)}\leq \n{A} \n{x^{k+1}-x^k}$ it follows that
\begin{multline}\label{str:lin_alter-3}
\frac{1+\t_k}{2\t_k}\n{x^{k+1}-x}^2 +\frac{\Sigma_k}{\s_k}(F_{k+1}(s^{k+1})-g_*) +
\frac{1-\la \n{A}^2}{2\t_k} \n{x^{k+1}-x^k}^2 \\ + \s_k(f(x^k)-f_*)  \leq
\frac{1}{2\t_k}\n{x^{k}-x}^2 + \frac{\Sigma_{k-1}}{\s_k} (F_{k}(s^k)-g_*).
\end{multline}
By the definition of $(\t_k)$, we have $\frac{\t_{k}\s_{k+1}}{\s_k
\t_{k+1}} = \frac{\t_k^2}{\t_{k+1}^2}=(1+\t_k)$ and $\t_k \s_k =
\la$. Multiplying the left and right hand-sides of \eqref{str:lin_alter-3}
by $\s_k$ and using the latter identities, we deduce
\begin{equation}
    \label{lin_cons:accel-2}
    \frac{\s_{k+1}}{2\t_{k+1}}\n{x^{k+1}-\x}^2 +
\Sigma_k (F_{k+1}(s^{k+1}) - g_*)+
\s_k^2 (f(x^k)-f_*) \leq     \frac{\s_k}{2\t_{k}}\n{x^{k}-\x}^2 + \Sigma_{k-1} (F_{k}(s^{k}) - g_*).
\end{equation}
Iterating the above and recalling that $\Sigma_{-1} = 0$, we obtain
\begin{equation}
    \label{lin_cons:accel-3}
    \frac{\s_{k}}{2\t_{k}} \n{x^{k}-\x}^2 +
    \Sigma_{k-1}(F_{k}(s^{k}) - g_*) +
    \sum_{i=0}^{k-1}\s_i^2(f(x^i)-f_*) \leq
     \frac{\s_0}{2\t_{0}}\n{x^{0}-\x}^2 =   \frac{\s_0}{2\t_{0}}D_x^2.
 \end{equation}
 For simplicity, assume that $\t_0 = 1$. Then it is not difficult to
prove by induction that $\frac{2}{k+2} \leq \t_k \leq
\frac{3}{k+2}$. In the general case, all results will be the same up
to some constants, as it is known from \cite{chambolle2011first} that
$\t_k\sim 1/k$. In the case $\t_0=1$, we have
$\frac{k+2}{2}\la \geq \s_k \geq \frac{k+2}{3}\la$, hence
\begin{equation*}
\frac{(k+4)(k+1)}{4}\la \geq    \Sigma_k \geq \frac{(k+4)(k+1)}{6}\la.
\end{equation*}
From \eqref{lin_cons:accel-3} it follows that
$\Sigma_k (F_{k+1}(s^{k+1}) - g_*) \leq \frac{\la}{2}D_x^2 $, where we
took $\s_0 = \la $, due to $\t_0 = 1$. Thus,
 \begin{equation} \label{lin_str:k2}
F_{k+1}(s^{k+1})
 - g_* \leq \frac{\la D_x^2}{2 \Sigma_k}  \leq
 \frac{3D_x^2}{(k+4)(k+1)} = O(1/k^2).
\end{equation}
As $g$ is strongly convex, it is bounded below. The set
$S = \{\x\}$ is of course bounded, thus we can use the same arguments
as in part (iii) of Theorem~\ref{th_lin_cons} to conclude that $(s^k)$
is bounded, $s^k \to \x$ and $g(s^k) \to
g_*$. Equation~\eqref{lin_str:k2} also yields
$f(s^{k+1}) \leq \frac{\la D_x^2}{2 \Sigma_k^2} +
\frac{g_*-g(s^k)}{\Sigma_k} = o(1/k^2)$, since $g(s^k) \to g_*$ and
$1/\Sigma_k = O(1/k^2)$.
\smallskip

Case (ii). Strong duality holds for~\eqref{strong}.  Using that
\(g(s^k)-g_*\geq - D_y \cdot \sqrt{2(f(s^k)-f_*)}\), which is a
consequence of~\eqref{exist_dual}, one has
from~\eqref{lin_cons:accel-3}
\begin{equation}\label{lin_cons:accel-bound}
\frac{\s_k}{2 \t_k}\n{x^{k}-\x}^2 +\Sigma_{k-1}^2 (f(s^k)-f_*)  - D_y \Sigma_{k-1}
\sqrt{2(f(s^k)-f_*)} +  \sum_{i=0}^{k-1} \s_i^2 (f(x^i)-f_*) \leq
\frac{\la D_x^2}{2}.
\end{equation}

Let $t = \Sigma_{k-1} \sqrt{f(s^k)-f_*}$. Then from
the last equation it follows that
$ t^2 - \sqrt 2 D_y t \leq \frac{\la D_x^2}{2}$, from which one can
derive that $t \leq \frac{D_y + \sqrt{D_y^2+ \la D_x^2}}{\sqrt 2}$. By
this, we show that $f(s^k)-f_* = O(1/k^4)$. Since
$ t^2 -\sqrt 2 D_y t$ is bounded below by the constant
$-\frac{D_y^2}{2}$ and $\frac{\t_k}{\s_k}\leq \frac{(k+2)^2}{9\la}$, we conclude that
\begin{equation*}
    \n{x^k - \x}^2 \leq \frac{\t_k}{\s_k} (\la D_x^2 + D_y^2) \leq
    (D_x^2 + \frac{D_y^2}{\la}) \frac{9}{(k+2)^2}.
\end{equation*}
Recall that $t^2-\sqrt 2 D_y t = \Sigma_{k-1}(F_k (s^k)-g_*)$, thus
\begin{equation}
    \label{eq:7}
    - \frac{D_y^2}{2}  \leq   \Sigma_{k-1}(F_k(s^k) - g_* ) \leq \frac{\la
    D_x^2}{2}.
\end{equation}
From this we observe that $|F_k(s^k)-g_*|=O(1/k^2)$ and due to the
asymptotic of $f(s^k)-f_*$ we have $|g(s^k)-g_*| = O(1/k^2)$.
Finally, from \eqref{lin_cons:accel-bound} the sequence
$\bigl(\s_i^2 (f(x^i)-f_*)\bigr)_i$ is summable and since $\s^2_k\sim k^2$,
we have that $f(x^k)-f_* = O(1/k^3)$.  \
\end{proof}

\begin{remark}
    We note that for both cases mentioned in this section it is
    straightforward to derive similar results as in
    Section~\ref{sub:cons}. Notice also that for a more general case
    $\gamma \neq 1$,  all rates in Theorem~\ref{th:g-strong}  remain the same.
\end{remark}

\bibliographystyle{acm}
\small
\bibliography{biblio}

\begin{thebibliography}{10}

\bibitem{banert2016fixing}
{\sc Banert, S., Bot, R.~I., and Csetnek, E.~R.}
\newblock Fixing and extending some recent results on the admm algorithm.
\newblock {\em arXiv:1612.05057\/} (2016).

\bibitem{fista}
{\sc Beck, A., and Teboulle, M.}
\newblock A fast iterative shrinkage-thresholding algorithm for linear inverse
  problem.
\newblock {\em SIAM Journal on Imaging Sciences 2}, 1 (2009), 183--202.

\bibitem{chambolle2011first}
{\sc Chambolle, A., and Pock, T.}
\newblock A first-order primal-dual algorithm for convex problems with
  applications to imaging.
\newblock {\em Journal of Mathematical Imaging and Vision 40}, 1 (2011),
  120--145.

\bibitem{chambolle2016introduction}
{\sc Chambolle, A., and Pock, T.}
\newblock An introduction to continuous optimization for imaging.
\newblock {\em Acta Numerica 25\/} (2016), 161--319.

\bibitem{Condat2013}
{\sc Condat, L.}
\newblock A primal--dual splitting method for convex optimization involving
  lipschitzian, proximable and linear composite terms.
\newblock {\em Journal of Optimization Theory and Applications 158}, 2 (2013),
  460--479.

\bibitem{dvurechensky2018computational}
{\sc Dvurechensky, P., Gasnikov, A., and Kroshnin, A.}
\newblock Computational optimal transport: Complexity by accelerated gradient
  descent is better than by sinkhorn’s algorithm.
\newblock In {\em Proceedings of the 35th International Conference on Machine
  Learning\/} (Stockholmsmässan, Stockholm Sweden, 10--15 Jul 2018), J.~Dy and
  A.~Krause, Eds., vol.~80 of {\em Proceedings of Machine Learning Research},
  PMLR, pp.~1367--1376.

\bibitem{fercoq2015accelerated}
{\sc Fercoq, O., and Richt{\'a}rik, P.}
\newblock Accelerated, parallel, and proximal coordinate descent.
\newblock {\em SIAM Journal on Optimization 25}, 4 (2015), 1997--2023.

\bibitem{fletcher2013practical}
{\sc Fletcher, R.}
\newblock {\em Practical methods of optimization}.
\newblock John Wiley \& Sons, 2013.

\bibitem{garrigos2018iterative}
{\sc Garrigos, G., Rosasco, L., and Villa, S.}
\newblock Iterative regularization via dual diagonal descent.
\newblock {\em Journal of Mathematical Imaging and Vision 60}, 2 (2018),
  189--215.

\bibitem{he-yuan:2012}
{\sc He, B., and Yuan, X.}
\newblock Convergence analysis of primal-dual algorithms for a saddle-point
  problem: From contraction perspective.
\newblock {\em SIAM Journal on Imaging Sciences 5}, 1 (2012), 119--149.

\bibitem{jakovetic2015linear}
{\sc Jakoveti{\'c}, D., Moura, J.~M., and Xavier, J.}
\newblock Linear convergence rate of a class of distributed augmented
  lagrangian algorithms.
\newblock {\em IEEE Transactions on Automatic Control 60}, 4 (2015), 922--936.

\bibitem{lan2017communication}
{\sc Lan, G., Lee, S., and Zhou, Y.}
\newblock Communication-efficient algorithms for decentralized and stochastic
  optimization.
\newblock {\em Mathematical Programming\/} (2017), 1--48.

\bibitem{lan2011primal}
{\sc Lan, G., Lu, Z., and Monteiro, R.~D.}
\newblock Primal-dual first-order methods with iteration-complexity for cone
  programming.
\newblock {\em Mathematical Programming 126}, 1 (2011), 1--29.

\bibitem{luke2018block}
{\sc Luke, D.~R., and Malitsky, Y.}
\newblock {\em Block-Coordinate Primal-Dual Method for Nonsmooth Minimization
  over Linear Constraints}.
\newblock Springer International Publishing, Cham, 2018, pp.~121--147.

\bibitem{malitsky2016first}
{\sc Malitsky, Y., and Pock, T.}
\newblock A first-order primal-dual algorithm with linesearch.
\newblock {\em SIAM Journal on Optimization 28}, 1 (2018), 411--432.

\bibitem{merris1994laplacian}
{\sc Merris, R.}
\newblock Laplacian matrices of graphs: a survey.
\newblock {\em Linear algebra and its applications 197\/} (1994), 143--176.

\bibitem{necoara2015complexity}
{\sc Necoara, I., Patrascu, A., and Glineur, F.}
\newblock Complexity of first-order inexact lagrangian and penalty methods for
  conic convex programming.
\newblock {\em Optimization Methods and Software 34}, 2 (2019), 305--335.

\bibitem{nesterov1983method}
{\sc Nesterov, Y.}
\newblock A method for unconstrained convex minimization problem with the rate
  of convergence ${O}(1/k^2)$.
\newblock {\em Doklady AN SSSR 269}, 3 (1983), 543--547.

\bibitem{nesterov07}
{\sc Nesterov, Y.}
\newblock Gradient methods for minimizing composite functions.
\newblock {\em Mathematical Programming 140}, 1 (2013), 125--161.

\bibitem{nesterov2013introductory}
{\sc Nesterov, Y.}
\newblock {\em Introductory lectures on convex optimization: A basic course},
  vol.~87.
\newblock Springer Science \& Business Media, 2013.

\bibitem{saad}
{\sc Saad, Y.}
\newblock {\em Iterative Methods for Sparse Linear Systems}, second~ed.
\newblock Society for Industrial and Applied Mathematics, 2003.

\bibitem{pmlr-v70-scaman17a}
{\sc Scaman, K., Bach, F., Bubeck, S., Lee, Y.~T., and Massouli{\'e}, L.}
\newblock Optimal algorithms for smooth and strongly convex distributed
  optimization in networks.
\newblock In {\em Proceedings of the 34th International Conference on Machine
  Learning\/} (International Convention Centre, Sydney, Australia, 06--11 Aug
  2017), D.~Precup and Y.~W. Teh, Eds., vol.~70 of {\em Proceedings of Machine
  Learning Research}, PMLR, pp.~3027--3036.

\bibitem{shefi2014rate}
{\sc Shefi, R., and Teboulle, M.}
\newblock Rate of convergence analysis of decomposition methods based on the
  proximal method of multipliers for convex minimization.
\newblock {\em SIAM Journal on Optimization 24}, 1 (2014), 269--297.

\bibitem{solodov2007explicit}
{\sc Solodov, M.}
\newblock An explicit descent method for bilevel convex optimization.
\newblock {\em Journal of Convex Analysis 14}, 2 (2007), 227.

\bibitem{tran2018proximal}
{\sc Tran-Dinh, Q.}
\newblock Proximal alternating penalty algorithms for nonsmooth constrained
  convex optimization.
\newblock {\em Computational Optimization and Applications 72}, 1 (2019),
  1--43.

\bibitem{tran2018smooth}
{\sc Tran-Dinh, Q., Fercoq, O., and Cevher, V.}
\newblock A smooth primal-dual optimization framework for nonsmooth composite
  convex minimization.
\newblock {\em SIAM Journal on Optimization 28}, 1 (2018), 96--134.

\bibitem{tseng08}
{\sc Tseng, P.}
\newblock On accelerated proximal gradient methods for convex-concave
  optimization, 2008.

\bibitem{uribe2017optimal}
{\sc Uribe, C.~A., Lee, S., Gasnikov, A., and Nedi{\'c}, A.}
\newblock Optimal algorithms for distributed optimization.
\newblock {\em arXiv:1712.00232\/} (2017).

\bibitem{vu2013splitting}
{\sc V{\~u}, B.~C.}
\newblock A splitting algorithm for dual monotone inclusions involving
  cocoercive operators.
\newblock {\em Advances in Computational Mathematics 38}, 3 (2013), 667--681.

\end{thebibliography}

\end{document}